\documentclass[a4paper]{article}

\usepackage[a4paper,margin=2.5cm]{geometry}
\usepackage{amsmath,amssymb,amsfonts,amsthm}
\usepackage{mathrsfs}
\usepackage{graphicx}
\usepackage[colorlinks=true]{hyperref}
\usepackage{indentfirst}

\numberwithin{equation}{section}

\newtheorem{Thm}{Theorem}[section]

\newtheorem{Lem}{Lemma}[section]
\newtheorem{Pro}{Proposition}[section]

\newtheorem{Rem}{Remark}[section]

\newcommand{\N}{\mathbb{N}}

\newcommand{\R}{\mathbb{R}}

\title{A positive ground state for a planar Choquard equation with mixed diffusion and critical exponential growth}

\author{Shaoxiong Chen, Sekhar Ghosh, Vishvesh Kumar, Zhipeng Yang\thanks{Corresponding author: Z. Yang.}}

\date{}

\AtEndDocument{%
  \par
  \bigskip
  \bigskip

  \noindent
  \textbf{Shaoxiong Chen:}\\[0.2em]
  \textsc{Department of Mathematics, Yunnan Normal University, Kunming, China}\\[0.3em]
  \textit{E-mail address}: \texttt{gxmail@126.com}\\[1.5em]
  \noindent
  \textbf{Sekhar Ghosh:}\\[0.2em]
  \textsc{Department of Mathematics, National Institute of Technology Calicut, Kerala, India}\\[0.3em]
  \textit{E-mail address}: \texttt{sekharghosh@nitc.ac.in and sekharghosh1234@gmail.com}\\[1.5em]
  \noindent
  \textbf{Vishvesh Kumar:}\\[0.2em]
  \textsc{Department of Mathematical Sciences, Indian Institute of Technology (BHU), Varanasi, India.}\\[0.3em]
  \textit{E-mail address}: \texttt{vishvesh.mat@iitbhu.ac.in and vishveshmishra@gmail.com}\\[1.5em]
  \noindent
  \textbf{Zhipeng Yang:}\\[0.2em]
  \textsc{Department of Mathematics, Yunnan Normal University, Kunming, China}\\
  \textsc{Yunnan Key Laboratory of Modern Analytical Mathematics and Applications, Kunming, China}\\[0.3em]
  \textit{E-mail address}: \texttt{yangzhipeng326@163.com}%
}

\begin{document}

\maketitle

\begin{abstract}
We study a two-dimensional Choquard equation driven by the mixed local and nonlocal operator \(L:=-\Delta+(-\Delta)^s\), where the nonlinearity has critical exponential growth of Trudinger--Moser type. Under a coercive assumption on the potential and suitable
one-sided assumptions on the nonlinearity, we prove the existence of a least energy positive solution. The proof combines Nehari manifold minimization, compactness below the critical Trudinger--Moser threshold, local regularity, and a strong maximum principle.
\par
\smallskip
\noindent {\bf Keywords}: Choquard equation; Mixed local--nonlocal operators; Trudinger--Moser inequality; Nehari manifold.

\smallskip
\noindent {\bf MSC2020}: 35M12, 35R11, 35J61, 35J20.
\end{abstract}

\section{Introduction and main result}

In this paper, we study the existence of a least energy positive solution for the planar Choquard equation with mixed local and nonlocal diffusion
\begin{equation}\label{eq1.1}
-\Delta u+(-\Delta)^su+V(x)u
=\left(\frac1{|x|^\mu}*F(u)\right)f(u)
\quad\text{in }\R^2,
\end{equation}
where \(s\in(0,1)\), \(\mu\in(0,2)\), \(F(t)=\int_0^t f(\tau)\,d\tau,\) and \(*\) denotes the convolution in \(\R^2\). The mixed operator
\[
\mathcal L:=-\Delta+(-\Delta)^s
\]
combines the classical Laplacian with a fractional Laplacian. For smooth functions, the fractional Laplacian is given, up to a normalization constant, by
\[
(-\Delta)^su(x):=\operatorname{P.V.}\int_{\R^2}\frac{u(x)-u(y)}{|x-y|^{2+2s}}\,dy.
\]
Thus \(\mathcal L\) describes the superposition of a local Brownian diffusion and a long-range jump process. Mixed operators of this form have recently attracted attention because they display features that are absent in purely local or purely nonlocal models. In particular, Biagi, Dipierro, Valdinoci and Vecchi  \cite{Biagi2022CPDE} developed a systematic analysis of
\[
-\Delta u+(-\Delta)^su=f
\]
including existence, maximum principles, and regularity for mixed local--nonlocal elliptic problems. Related recent developments include a Brezis--Oswald approach for mixed operators \cite{Biagi2024CCM}, combined local--nonlocal stationary problems \cite{Arora2025PRSE}, and Lazer--McKenna type problems for mixed elliptic operators \cite{HuangHajaiej2025LazerMcKennaMixed}. These results are essential for problems such as \eqref{eq1.1}, where local elliptic regularization and nonlocal interactions occur simultaneously.

We note that the right-hand side of \eqref{eq1.1} is a Choquard-type nonlinearity. In the classical setting, the model involving the Laplacian goes back to Pekar's description of polarons \cite{Pekar1954}, given by
\begin{equation} \label{eq1.2}
    -\Delta u+u=\left(\frac1{|x|}*u^2\right)u
\quad\text{in }\R^3.
\end{equation}
After that, Lieb \cite{Lieb1977} proved the existence and uniqueness, up to translations, of the minimizing solution of \eqref{eq1.2} by rearrangement methods. Lions \cite{Lions1980NA} later studied related Hartree--type equations and variational structures. For a modern treatment of nonlinear Choquard equations, Moroz and Van Schaftingen  \cite{MorozVanSchaftingen2013} investigated existence, regularity, positivity, symmetry, and decay properties of ground states of the following problem
\[
-\Delta u+u=(I_\alpha*|u|^p)|u|^{p-2}u\quad\text{in }\R^N,
\]
 and identified optimal ranges of \(p\). We cite the guide \cite{MorozVanSchaftingen2017} for a comprehensive survey of Choquard-type equations. We also refer to the monograph of Lieb and Loss for the Hardy--Littlewood--Sobolev inequality used to handle the convolution term \cite{LiebLoss2001}.

In dimensions \(N\ge3\), the critical Choquard exponent is governed by the Hardy--Littlewood--Sobolev inequality. However, in two dimensions, the Sobolev critical exponent is replaced by the Trudinger--Moser threshold. The sharp exponential growth is encoded by inequalities of the form
\[
\int_{\R^2}\bigl(e^{\alpha u^2}-1\bigr)\,dx<\infty
\quad\text{for }\alpha\le4\pi
\]
under a suitable normalization of \(u\). Cao's whole-space version of the Trudinger--Moser inequality \cite{Cao1992CPDE} and the Adachi--Tanaka inequality \cite{AdachiTanaka2000} are standard tools in variational problems with critical exponential growth. Related elliptic equations in \(\R^2\) with critical exponential nonlinearities and exact-growth refinements were studied in \cite{deFigueiredoMiyagakiRuf1995, IbrahimMasmoudiNakanishi2015}.

In the present paper, the convolution term forces the natural exponent \(p_\mu=\frac4{4-\mu},\) which is dictated by the Hardy--Littlewood--Sobolev relation in \(\R^2\). Consequently, compactness has to be recovered below a threshold depending on both the Trudinger--Moser constant \(4\pi\) and \(p_\mu\).

Recently, several works have focused on Choquard equations with mixed diffusion or critical behavior. For instance, Anthal, Giacomoni, and Sreenadh \cite{Anthal2023JMAA} studied mixed local--nonlocal Choquard equations on bounded domains with Hardy--Littlewood--Sobolev critical nonlinearities. Following this work, Giacomoni, Nidhi and Sreenadh \cite{GiacomoniNidhiSreenadh2025} considered normalized solutions for a critical Choquard equation involving mixed diffusion-type operators and proved existence and regularity results under mass constraints. In a semiclassical two-dimensional setting, Chen, Yang, and Yang \cite{ChenYangYang2026} studied the following problem
\[
-\varepsilon^2\Delta u+\varepsilon^{2s}(-\Delta)^su+V(x)u
=\varepsilon^{\mu-2}\left(\frac1{|x|^\mu}*F(u)\right)f(u)
\quad\text{in }\R^2,
\]
where \(f\) has Trudinger--Moser critical exponential growth, and obtained ground states together with concentration phenomena as \(\varepsilon\to0^+\). Furthermore, Chen, Hajaiej, Yang, and Yang \cite{ChenHajaiejYangYang2026} studied the fixed-scale mixed Choquard equation with Trudinger--Moser subcritical exponential growth and proved the existence of a least energy positive solution, a sign-changing solution, and infinitely many sign-changing solutions under an additional oddness assumption.

Inspired by these works, the present paper addresses the critical exponential regime for the fixed-scale problem \eqref{eq1.1} on the whole plane and establishes the existence of a least energy positive solution by a Nehari minimization scheme below the critical compactness threshold.

The main difficulty and novelty lies in handling the simultaneous presence of three noncompact effects: the unbounded domain \(\R^2\), the critical exponential growth, and the nonlocal Choquard convolution. The coercive condition on \(V\) restores compact embeddings of the corresponding Sobolev space into finite Lebesgue spaces, but this alone is not sufficient at the Trudinger--Moser threshold. We therefore combine a Moser-type comparison argument with a low-energy compactness lemma. More precisely, the critical lower bound in \((f_4)\) gives a path below the level \(\ell_* = \frac{a_\theta}{p_\mu},~ a_\theta=\frac12-\frac1{2\theta},\) and in particular, the Palais--Smale compactness is then proved below \(\ell_*\). This allows us to minimize the energy on the Nehari manifold, obtain a nontrivial nonnegative weak solution, and finally use local regularity and the strong maximum principle for mixed operators to conclude strict positivity.

We impose the following assumptions on the potential \(V\):
\begin{itemize}
\item[$(V_1)$] \(V\in C(\R^2,\R)\) and there exists \(V_0>0\) such that
\[
V(x)\ge V_0 \quad \text{for all } x\in\R^2.
\]
\item[$(V_2)$] For every \(M>0\),
\[
\mathrm{meas}\bigl(\{x\in\R^2: V(x)\le M\}\bigr)<\infty.
\]
\end{itemize}
For the Ambrosetti--Rabinowitz constant \(\theta>1\) appearing in \((f_3)\), set
\[
a_\theta=\frac12-\frac1{2\theta},
\qquad
\ell_*=\frac{a_\theta}{p_\mu},
\]
where \(p_\mu=\frac4{4-\mu}.\) We assume that the nonlinearity is truncated on the negative half-line and satisfies critical growth on the positive half-line:
\begin{itemize}
\item[$(f_1)$] \(f\in C^1(\R,\R)\), \(f(t)=0\) for all \(t\le0\), and \(f\) has critical exponential growth at \(+\infty\), namely
\[
\lim_{t\to+\infty}\frac{f(t)}{\exp(\alpha t^{2})}=0\quad\text{for all }\alpha>4\pi,
\qquad
\lim_{t\to+\infty}\frac{f(t)}{\exp(\alpha t^{2})}=+\infty\quad\text{for all }\alpha<4\pi.
\]
\item[$(f_2)$] The map \(t\mapsto \frac{f(t)}{t}\) is strictly increasing on \((0,+\infty)\).
\item[$(f_3)$] There exists \(\theta>1\) such that \(0\le \theta F(t)\le f(t)t\) for all \(t\ge0.\)
\item[$(f_4)$] There exist \(T_0>0\) and \(C_0\ge C_*\), where
\(C_*>0\) is the constant defined in Lemma~\ref{Lem3.5}, such that
\[
F(t)\ge C_0 e^{4\pi t^2}
\qquad\text{for all }t\ge T_0.
\]
Here \(C_*\) may depend on \(T_0\) and on the auxiliary Moser construction used in Lemma~\ref{Lem3.5}.

\end{itemize}

\begin{Rem}
The smallness of $f$ at the origin needed below follows from \((f_1)\). Indeed, since
\(f\in C^1(\R,\R)\) and \(f(t)=0\) for all \(t\le0\), we have
\(f(0)=f'(0)=0\). Hence
\[
f(t)=o(t)\quad\text{as }t\to0^+,
\]
and, because \((2-\mu)/2\in(0,1)\), we also have
\[
\frac{f(t)}{t^{(2-\mu)/2}}\to0
\quad\text{as }t\to0^+.
\]
\end{Rem}
We now state the main result of this paper.
\begin{Thm}\label{Thm1.1}
Assume that \(V\) satisfies \((V_1)\)--\((V_2)\) and \(f\) satisfies \((f_1)\)--\((f_4)\).
Then problem \eqref{eq1.1} admits a least energy positive solution.
\end{Thm}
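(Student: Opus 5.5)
The plan is to work in the Hilbert space
\[
E=\Bigl\{u\in H^1(\R^2):\int_{\R^2}V(x)u^2\,dx<\infty\Bigr\},\qquad
\|u\|^2=\int_{\R^2}\bigl(|\nabla u|^2+V(x)u^2\bigr)dx+[u]_s^2 ,
\]
and with the energy functional
\[
I(u)=\frac12\|u\|^2-\frac12\int_{\R^2}\Bigl(\frac1{|x|^\mu}*F(u)\Bigr)F(u)\,dx .
\]
Since $[u]_s^2\le C\|u\|_{H^1}^2$, the space $E$ embeds continuously into $H^1(\R^2)$. By $(V_1)$--$(V_2)$ the embedding $E\hookrightarrow L^q(\R^2)$ is compact for every $q\in[2,\infty)$.

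\textbf{Well-posedness of $I$.} The Hardy--Littlewood--Sobolev inequality gives
\[
\int_{\R^2}\Bigl(\frac1{|x|^\mu}*F(u)\Bigr)F(u)\,dx\le C\,\|F(u)\|_{L^{p_\mu}}^2 .
\]
By $(f_1)$ and the Remark, $|F(t)|\le \varepsilon t^2+C_\varepsilon|t|^q(e^{\alpha t^2}-1)$ for any $\alpha>4\pi$. Combining this with Cao's inequality, applied to $\|F(u)\|_{L^{p_\mu}}$, shows that $I\in C^1(E)$ on bounded sets with $\|u\|$ small. It also gives the mountain pass geometry: $I(u)\ge c\|u\|^2$ for $\|u\|=\rho$, while $I(tu_0)\to-\infty$ for $u_0\ge0$, $u_0\ne0$, by $(f_3)$.

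\textbf{Nehari structure.} Let $\mathcal N=\{u\ne0: I'(u)u=0\}$ and $m=\inf_{\mathcal N}I$. By $(f_2)$, for each $u$ with $u^+\ne0$ the fibering map $t\mapsto I(tu)$ has a unique maximum point $t_u$. Hence
\[
m=\inf_{u^+\ne0}\max_{t>0}I(tu)=c_{mp}>0 .
\]

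\textbf{Level estimate.} Lemma~\ref{Lem3.5} provides, via Moser functions $w_n$ and the lower bound $(f_4)$ with $C_0\ge C_*$, the estimate $\max_{t\ge0}I(tw_n)<\ell_*$ for some $n$. Therefore $m<\ell_*$.

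\textbf{Compactness.} Take a minimizing sequence $(u_n)\subset\mathcal N$. Ekeland's principle, or the mountain pass theorem without PS, gives a $(PS)_m$ sequence. Using $(f_3)$,
\[
m+o(1)\ge I(u_n)-\frac1{2\theta}I'(u_n)u_n\ge a_\theta\|u_n\|^2 ,
\]
so $\limsup\|u_n\|^2\le m/a_\theta<1/p_\mu$. This is the key point: with $\|u_n\|^2<1/p_\mu-\delta$, choosing $\alpha>4\pi$ close to $4\pi$ and $r>1$ close to $1$ so that $\alpha p_\mu r\|u_n\|^2<4\pi$, Cao's inequality gives uniform bounds on $F(u_n)$ and $f(u_n)$ in $L^{p_\mu r}$. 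Together with the compact embedding into $L^q$ and Vitali's theorem, this yields
\[
\int\Bigl(\frac1{|x|^\mu}*F(u_n)\Bigr)f(u_n)u_n\to\int\Bigl(\frac1{|x|^\mu}*F(u)\Bigr)f(u)u
\]
(and similarly for $u$ replaced by $\varphi$ and for $F(u_n)$) along a subsequence $u_n\rightharpoonup u$. Hence $\|u_n\|\to\|u\|$, so $u_n\to u$ strongly, $I'(u)=0$ and $I(u)=m$. The limit is nontrivial since $m>0$. This is exactly the low-energy compactness below $\ell_*$. I expect this step, precisely the precise choice of exponents to keep $\alpha p_\mu r\|u_n\|^2<4\pi$ uniformly, to be the main technical obstacle, along with ensuring $m<\ell_*$ through Lemma~\ref{Lem3.5}.

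\textbf{Sign.} Test $I'(u)=0$ with $u^-$. Since $f=0$ on $(-\infty,0]$, the right side vanishes. On the left, the local part gives $\|u^-\|_{H^1_V}^2$, and the nonlocal part satisfies $[u,u^-]_s\ge[u^-]_s^2$ by the pointwise inequality $(u(x)-u(y))(u^-(x)-u^-(y))\ge|u^-(x)-u^-(y)|^2$, with $u^-=\max(-u,0)$ and the appropriate sign convention. Hence $u^-=0$ and $u\ge0$.

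\textbf{Positivity and least energy.} Since $u\in L^\infty_{loc}$ via Trudinger--Moser, and $\frac1{|x|^\mu}*F(u)\in L^\infty$ by splitting near and far, the right-hand side lies in $L^p_{loc}$. Local regularity for the mixed operator \cite{Biagi2022CPDE} then gives $u\in C^{1,\beta}_{loc}$. Because $u\ge0$, $u\not\equiv0$, and the equation gives $\mathcal L u+V^+u\ge0$ locally, the strong maximum principle for $\mathcal L$ yields $u>0$ in $\R^2$. Finally, every nontrivial solution lies in $\mathcal N$, so $I(u)=m$ is the least energy.
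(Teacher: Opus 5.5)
Your argument is correct in outline, but it obtains the critical point by a different route from the paper.

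\textbf{The paper's route.} The paper never builds a Palais--Smale sequence. In Lemma~\ref{Lem3.6} it replaces a minimizing sequence by the nonnegative elements $t_nu_n^+\in\mathcal N$, which have lower energy because of the truncation. It uses the bound $\limsup\|u_n\|^2<1/p_\mu$ only to pass to the limit in the Choquard terms. It then gets a minimizer $t_0u_0$ with $t_0\le 1$ from weak lower semicontinuity and the fibering maps, without proving strong convergence. Step~1 of the proof of Theorem~\ref{Thm1.1} shows that a Nehari minimizer is a free critical point, by deforming the ray $t\mapsto tu$ along a descent direction. This uses only $I\in C^1(E)$ and Lemma~\ref{Lem3.1}, needs no minimax theorem, and gives a nonnegative solution directly. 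In fact the conclusion of Lemma~\ref{Lem2.7} is never invoked.

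\textbf{Your route.} You identify $m$ with the mountain pass level over paths ending in $\{I<0\}$. The crossing argument works because $I'(v)v>0$ for small $\|v\|\ne0$ and $I'(v)v\le 2\theta I(v)<0$ at the endpoint. You then take a $(PS)_m$ sequence from the mountain pass theorem without (PS) and apply exactly the compactness of Lemma~\ref{Lem2.7}. Its nonnegativity hypothesis is superfluous: the truncation makes $F\ge0$ and $\theta F(t)\le f(t)t$ hold on all of $\R$. So an unsigned PS sequence is fine, and the sign is recovered afterwards by testing with $u^-$.

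\textbf{What each buys.} Your route is shorter once the abstract theorem is granted, and it gives strong convergence for free. The paper's route is more elementary. Do not rely on the Ekeland-on-$\mathcal N$ alternative, though. Converting a constrained PS sequence into a free one via Lagrange multipliers needs $u\mapsto I'(u)u$ to be $C^1$, i.e.\ growth control on $f'$, which $(f_1)$--$(f_4)$ do not give.

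\textbf{Side claims to fix.} None of these breaks the proof.
\begin{itemize}
\item The mountain pass theorem needs $I\in C^1$ on all of $E$, not only for small $\|u\|$. This follows from the first part of Cao's inequality (valid for every $\alpha>0$), as in Lemma~\ref{Lem2.4}.
\item Trudinger--Moser does not give $u\in L^\infty_{loc}$. It gives $f(u)\in L^p_{loc}$ for all $p<\infty$, and, with your near/far splitting, $\frac1{|x|^\mu}*F(u)\in L^\infty$. That already puts the right-hand side in $L^p_{loc}$; local boundedness is an output of the regularity theory, not an input.
\item $f(u_n)$ need not be bounded in $L^{p_\mu r}(\R^2)$: near $0$ you only know $f(t)=o(t)$, and $p_\mu r<2$. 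You only need $f(u_n)u_n$ globally and $f(u_n)\varphi$ locally.
\item The regularity and maximum principle results of Biagi--Dipierro--Valdinoci--Vecchi are stated for Dirichlet problems in bounded domains. For a whole-plane solution you need local versions that handle the nonlocal tail; the paper uses Garain--Lindgren and Shang--Zhang.
\end{itemize}
Your direct use of the supersolution inequality $\mathcal L u+Vu\ge0$ with $V\ge0$ is simpler than the paper's detour through $c(x)$ and $c_-$, and it suffices for such a strong maximum principle.
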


The paper is organized as follows. In Section~\ref{Sec2} we set up the variational
framework and prove the compactness result below the critical threshold. In
Section~\ref{Sec3} we prove Theorem~\ref{Thm1.1}.

\section{The variational framework and critical compactness}\label{Sec2}

\noindent Throughout this section we assume that $V$ satisfies $(V_1)$--$(V_2)$ and that $f$ satisfies $(f_1)$--$(f_4)$. For any $p\in[1,+\infty)$, we write
\[
\|u\|_{p}:=\Bigl(\int_{\R^{2}}|u(x)|^{p}\,dx\Bigr)^{\frac{1}{p}}.
\]
The Sobolev space $H^{1}(\R^{2})$ is defined by
\[
H^{1}(\R^{2})
:=\bigl\{u\in L^{2}(\R^{2}) : \nabla u\in L^{2}(\R^{2};\R^{2})\bigr\}
\]
endowed with the norm
\[
\|u\|_{H^{1}(\R^{2})}
:=\Bigl(\int_{\R^{2}}\bigl(|u|^{2}+|\nabla u|^{2}\bigr)\,dx\Bigr)^{\frac12}.
\]
For $s\in(0,1)$, the fractional Sobolev space $H^{s}(\R^{2})$ is defined by
\[
H^{s}(\R^{2})
=\Bigl\{u\in L^{2}(\R^{2}) :
\int_{\R^{2}}\int_{\R^{2}}\frac{|u(x)-u(y)|^{2}}{|x-y|^{2+2s}}\,dx\,dy<\infty\Bigr\},
\]
with the Gagliardo seminorm
\[
[u]_{s}
=\Bigl(\int_{\R^{2}}\int_{\R^{2}}\frac{|u(x)-u(y)|^{2}}{|x-y|^{2+2s}}\,dx\,dy\Bigr)^{\frac12}.
\]
We write $\langle \cdot,\cdot\rangle$ for the duality pair between $H^{-s}(\R^2)$ and $H^{s}(\R^2)$, and fix the normalization of $(-\Delta)^s$ by
\[
\langle (-\Delta)^{s}u,v\rangle
=\frac12\int_{\R^{2}}\int_{\R^{2}}
\frac{(u(x)-u(y))(v(x)-v(y))}{|x-y|^{2+2s}}\,dx\,dy~~\text{for all}~~u,v\in H^{s}(\R^{2}).
\]
The following lemma can be found in \cite{Anthal2023JMAA}.
\begin{Lem}\label{Lem2.1}
Let $0<s<1$. Then $H^{1}(\R^{2})$ is continuously embedded into $H^{s}(\R^{2})$, that is, there exists a constant $C_{s}>0$ such that for every $u\in H^{1}(\R^{2})$,
\[
[u]_{s}^{2}\le C_{s}\,\|u\|_{H^{1}(\R^{2})}^{2}.
\]
\end{Lem}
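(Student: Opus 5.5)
The estimate $[u]_s^2\le C_s\|u\|_{H^1(\R^2)}^2$ is a Fourier-side bound. First, I will use the Plancherel representation of the Gagliardo seminorm: for $u\in L^2(\R^2)$,
\[
[u]_s^2=c(s)\int_{\R^2}|\xi|^{2s}|\hat u(\xi)|^2\,d\xi ,
\]
for an explicit constant $c(s)\in(0,\infty)$ depending on $s$ and the dimension. This holds with both sides possibly infinite. I would derive it by substituting $y=x+h$ and applying Plancherel in $x$ for each fixed $h$:
\[
\int_{\R^2}|u(x+h)-u(x)|^2\,dx=\int_{\R^2}|e^{i\xi\cdot h}-1|^2|\hat u(\xi)|^2\,d\xi .
\]
Integrating against $|h|^{-2-2s}\,dh$ and using Tonelli then reduces everything to the quantity
\[
c(s)=\int_{\R^2}\frac{|e^{i\xi\cdot h}-1|^2}{|h|^{2+2s}}\,dh .
\]
By rotation and scaling invariance this equals $|\xi|^{2s}$ times the same integral with $\xi$ replaced by the unit vector $e_1$.

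Second, I need that integral to be finite. Near $h=0$ the integrand is bounded by $|h|^2|h|^{-2-2s}=|h|^{-2s}$, which is integrable in $\R^2$ because $2s<2$. Away from the origin it is bounded by $4|h|^{-2-2s}$, which is integrable at infinity because $2+2s>2$. This is the only place where $0<s<1$ enters, and I expect it to be the main (but minor) obstacle. Everything else is routine once Tonelli is justified, and that is immediate since all integrands are nonnegative.

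Third, I will use the elementary bound $|\xi|^{2s}\le 1+|\xi|^2$ for all $\xi$. This holds because either $|\xi|\le1$, in which case $|\xi|^{2s}\le 1$, or $|\xi|>1$, in which case $|\xi|^{2s}\le|\xi|^2$. Combined with $\|u\|_{H^1}^2=\int(1+|\xi|^2)|\hat u|^2\,d\xi$ (Plancherel with the unitary normalization of the Fourier transform), this gives
\[
[u]_s^2\le c(s)\,\|u\|_{H^1(\R^2)}^2 .
\]
So the lemma holds with $C_s=c(s)$. Continuity of the embedding $H^1(\R^2)\hookrightarrow H^s(\R^2)$ follows, since the $L^2$ part of the $H^s$ norm is trivially controlled.

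Alternatively, one can avoid the Fourier transform entirely by splitting the double integral into the regions $|x-y|<1$ and $|x-y|\ge1$. On the first region, writing $u(x+h)-u(x)=\int_0^1\nabla u(x+th)\cdot h\,dt$ gives $\|u(\cdot+h)-u\|_2^2\le|h|^2\|\nabla u\|_2^2$, which yields a contribution of at most $C\|\nabla u\|_2^2$. On the second region, $|u(x)-u(y)|^2\le2|u(x)|^2+2|u(y)|^2$, which yields at most $C\|u\|_2^2$. Either route proves the lemma.
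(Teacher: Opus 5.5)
Your argument is correct. The paper gives no proof of Lemma~\ref{Lem2.1}; it simply cites Anthal--Giacomoni--Sreenadh, so you are supplying a self-contained argument where the paper relies on the literature.

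Your second (splitting) route is the standard proof of this embedding (see, e.g., Di Nezza--Palatucci--Valdinoci). It gives the explicit bound $[u]_s^2\le \frac{\pi}{1-s}\|\nabla u\|_2^2+\frac{4\pi}{s}\|u\|_2^2$. It also carries over verbatim to $W^{1,p}\hookrightarrow W^{s,p}$ for every $p\ge1$. The one step you should spell out is the translation bound $\|u(\cdot+h)-u\|_2\le |h|\,\|\nabla u\|_2$. Prove it first for $C_c^\infty$ functions and extend it to $H^1(\R^2)$ by density, or read it off directly from $|e^{i\xi\cdot h}-1|\le|\xi|\,|h|$.

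Your Fourier route is tied to $p=2$ and the whole plane, but it yields the exact identity $[u]_s^2=c(s)\int_{\R^2}|\xi|^{2s}|\hat u(\xi)|^2\,d\xi$. From this identity, H\"older's inequality in $\xi$ gives in one line the interpolation inequality $[v]_s\le C\|v\|_2^{1-s}\|\nabla v\|_2^{s}$, which the paper invokes without proof in Lemma~\ref{Lem3.5}.

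A small notational point: you first define $c(s)$ as the $h$-integral that still depends on $\xi$, and then use it as a constant. Define it instead as $\int_{\R^2}|e^{ih_1}-1|^2|h|^{-2-2s}\,dh$, that is, with $\xi$ replaced by $e_1$.
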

We introduce the space
\[
E=\left\{u \in H^1\left(\R^2\right): \int_{\R^2} V(x)|u|^2\,dx<+\infty\right\},
\]
equipped with the inner product
\[
(u,v)
=\int_{\R^{2}}\nabla u\cdot\nabla v\,dx
+\frac{1}{2}\int_{\R^{2}}\int_{\R^{2}}
\frac{(u(x)-u(y))(v(x)-v(y))}{|x-y|^{2+2s}}\,dx\,dy
+\int_{\R^{2}}V(x)\,u(x)v(x)\,dx,
\]
and norm
\[
\|u\|^{2}
=\int_{\R^{2}}|\nabla u|^{2}\,dx
+\frac{1}{2}\int_{\R^{2}}\int_{\R^{2}}
\frac{|u(x)-u(y)|^{2}}{|x-y|^{2+2s}}\,dx\,dy
+\int_{\R^{2}}V(x)\,|u(x)|^{2}\,dx.
\]
By Lemma~\ref{Lem2.1} and $(V_1)$, the above norm is well defined on $E$, and $E$ is a Hilbert space.

\begin{Rem}
In view of $(V_1)$--$(V_2)$, the space $E$ is compactly embedded in $L^{p}(\R^2)$ for any $2 \le p<+\infty$. This compactness property is standard for coercive potentials; we refer to \cite{kondrat1999discreteness} for more details.
\end{Rem}

\begin{Lem}\label{Lem2.2}
Assume \((V_1)\). Then \(C_c^\infty(\R^2)\) is dense in \(E\).
\end{Lem}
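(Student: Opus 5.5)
Given \(u\in E\) and \(\varepsilon>0\), we will find \(\varphi\in C_c^\infty(\R^2)\) with \(\|u-\varphi\|<\varepsilon\). We do this in two steps: truncate \(u\) with a cutoff, then mollify the compactly supported result. The norm \(\|\cdot\|\) has three parts: the Dirichlet integral, the Gagliardo term, and the weighted \(L^2\) term. By Lemma~\ref{Lem2.1}, the Gagliardo term is bounded by \(C_s\|\cdot\|_{H^1(\R^2)}^2\). So both steps only need control of the \(H^1\) norm together with the weighted term \(\int V|\cdot|^2\).

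\textbf{Truncation.} Fix \(\eta\in C_c^\infty(\R^2)\) with \(0\le\eta\le1\), \(\eta=1\) on \(B_1\) and \(\operatorname{supp}\eta\subset B_2\). Set \(\eta_R(x)=\eta(x/R)\) and \(u_R=\eta_R u\). Then \(\int V|u-u_R|^2\le\int_{|x|\ge R}V|u|^2\to0\) by dominated convergence, since \(V|u|^2\in L^1\). The standard \(H^1\) cutoff estimate gives
\[
\|\nabla(u-u_R)\|_2\le\|(1-\eta_R)\nabla u\|_2+\tfrac{C}{R}\|u\|_2\to0,
\qquad
\|u-u_R\|_2\to0,
\]
and Lemma~\ref{Lem2.1} then handles the fractional part. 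Hence \(\|u-u_R\|\to0\).

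\textbf{Mollification.} Now let \(w=u_R\), which lies in \(H^1\) and is supported in \(B_{2R}\). Take a standard mollifier \(\rho_\delta\) and set \(w_\delta=\rho_\delta*w\in C_c^\infty(\R^2)\), supported in \(B_{2R+1}\) for \(\delta<1\). We know \(w_\delta\to w\) in \(H^1(\R^2)\), so by Lemma~\ref{Lem2.1} the gradient and Gagliardo parts of \(\|w_\delta-w\|\) tend to zero. For the weighted part, \(V\) is continuous by \((V_1)\), so it is bounded on the compact set \(\overline{B_{2R+1}}\). Therefore
\[
\int V|w_\delta-w|^2\le \max_{\overline{B_{2R+1}}}V\cdot\|w_\delta-w\|_2^2\to0 .
\]
A diagonal choice of \(R\) and then \(\delta\) completes the proof.

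The only genuinely delicate point is the fractional seminorm. Estimating the nonlocal term of a truncated or mollified function directly would be awkward because the kernel couples far-apart points. We avoid this entirely through Lemma~\ref{Lem2.1}, which reduces everything to \(H^1\) convergence. Apart from that, continuity of \(V\) (local boundedness) is what makes the weighted term harmless after truncation, and this is exactly why only \((V_1)\) is needed.
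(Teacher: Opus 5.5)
Your proof is correct and follows essentially the same route as the paper: first the cutoff approximation $\chi_R u\to u$ in $E$, with Lemma~\ref{Lem2.1} handling the Gagliardo term and dominated convergence handling the weighted term, and then $H^1$-approximation of the compactly supported function, using that $V$ is bounded on a fixed ball. The only cosmetic difference is that you mollify explicitly (support in $B_{2R+1}$), whereas the paper invokes the density of $C_c^\infty(B_{2R})$ in $H_0^1(B_{2R})$.
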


\begin{proof}
Let \(u\in E\). Choose \(\chi\in C_c^\infty(\R^2)\) such that
\(0\le \chi\le1\), \(\chi\equiv1\) on \(B_1\), and \(\chi\equiv0\) on
\(\R^2\setminus B_2\). For \(R>1\), we set \(\chi_R(x)=\chi(x/R)\), so that \(\chi_R\in C_c^\infty(\R^2)\), \(\chi_R\equiv1\) on \(B_R\), \(\operatorname{supp}\chi_R\subset B_{2R}\), and
\(|\nabla\chi_R|\le C/R\) for some constant $C>0$.

We first show that \(\chi_Ru\to u\) in \(E\). Since \(u\in H^1(\R^2)\), one has
\[
\|(1-\chi_R)u\|_{L^2(\R^2)}\to0
\]
by dominated convergence, and
\[
\nabla\bigl((1-\chi_R)u\bigr)
=(1-\chi_R)\nabla u-u\nabla\chi_R.
\]
The first term converges to zero in \(L^2(\R^2)\) by dominated convergence, while
\[
\int_{\R^2}|u|^2|\nabla\chi_R|^2\,dx
\le \frac{C}{R^2}\int_{B_{2R}\setminus B_R}|u|^2\,dx
\le \frac{C}{R^2}\|u\|_{L^2(\R^2)}^2\to0.
\]
Thus, we have \(\chi_Ru\to u\) in \(H^1(\R^2)\). By Lemma~\ref{Lem2.1}, we get
\[
[\chi_Ru-u]_s\to0.
\]
Moreover, since \(Vu^2\in L^1(\R^2)\) and \(|1-\chi_R|\le1\), we obtain
\[
\int_{\R^2}V(x)\,|(1-\chi_R)u|^2\,dx\to0
\]
by dominated convergence. Therefore \(\chi_Ru\to u\) in \(E\).

We now fix \(R>1\). Since \(\chi_Ru\in H_0^1(B_{2R})\), there exists a sequence
\((\varphi_{R,n})\subset C_c^\infty(B_{2R})\) such that
\[
\varphi_{R,n}\to \chi_Ru
\qquad\text{in }H^1(\R^2).
\]
Again Lemma~\ref{Lem2.1} yields $[\varphi_{R,n}-\chi_Ru]_s\to0.$ Because \(V\) is continuous, it is bounded on \(B_{2R}\); and hence
\[
\int_{\R^2}V(x)\,|\varphi_{R,n}-\chi_Ru|^2\,dx
=
\int_{B_{2R}}V(x)\,|\varphi_{R,n}-\chi_Ru|^2\,dx
\le
\|V\|_{L^\infty(B_{2R})}\|\varphi_{R,n}-\chi_Ru\|_{L^2(\R^2)}^2\to0.
\]
Thus \(\varphi_{R,n}\to \chi_Ru\) in \(E\). Therefore, one can apply a diagonal argument to get a sequence
in \(C_c^\infty(\R^2)\) converging to \(u\) in \(E\).
\end{proof}
\noindent We say that $u\in E$ is a weak solution of \eqref{eq1.1} if 
\[
\int_{\R^{2}}\nabla u\cdot\nabla v\,dx
+\langle (-\Delta)^s u,v\rangle
+\int_{\R^{2}}V(x)\,u\,v\,dx
=\int_{\R^{2}}\left(\frac{1}{|x|^{\mu}}*F(u)\right)f(u)\,v\,dx,~~\text{for every}~~v\in E.
\]
The energy functional associated with \eqref{eq1.1} is
\[
I(u)
=\frac{1}{2}\|u\|^{2}
-\frac{1}{2}\int_{\R^{2}}\left(\frac{1}{|x|^{\mu}}*F(u)\right)F(u)\,dx.
\]
Note that if the nonlinear term involving convolution in $I$ is well-defined, its derivative is given by
\[
I'(u)[v]
=(u,v)
-\int_{\R^{2}}\left(\frac{1}{|x|^{\mu}}*F(u)\right) f(u)\,v\,dx.
\]
We record the following classical Hardy--Littlewood--Sobolev inequality (see \cite{LiebLoss2001}) that will be used repeatedly throughout the paper.

\begin{Lem}\label{Lem2.3}
Let $t,r>1$ and $0<\mu<N$ be such that
\[
\frac{1}{t}+\frac{\mu}{N}+\frac{1}{r}=2.
\]
If $g\in L^{t}(\R^{N})$ and $h\in L^{r}(\R^{N})$, then there exists $C(t,N,\mu,r)>0$ such that
\[
\left|
\int_{\R^{N}}\left(\frac{1}{|x|^{\mu}}*g\right)h\,dx
\right|
\le C(t,N,\mu,r)\,\|g\|_{t}\,\|h\|_{r}.
\]
In particular, when $N=2$ and $t=r=\frac{4}{4-\mu}$, one has
\[
\int_{\R^{2}}\left(\frac{1}{|x|^{\mu}}*G\right)G\,dx
\le C_\mu\,\|G\|_{\frac{4}{4-\mu}}^{2}.
\]
\end{Lem}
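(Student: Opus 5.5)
The statement just above is Lemma~\ref{Lem2.3}, the classical Hardy--Littlewood--Sobolev inequality. I would not reprove it from scratch; it is cited from \cite{LiebLoss2001}. Still, the plan for a self-contained argument, together with the special case used later, is as follows.

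For the general inequality I would follow the standard route through the Hardy--Littlewood maximal function. First split the kernel at a radius $R>0$ to be chosen depending on $x$:
\[
\Bigl(\frac1{|\cdot|^\mu}*g\Bigr)(x)=\int_{|x-y|<R}\frac{g(y)}{|x-y|^\mu}\,dy+\int_{|x-y|\ge R}\frac{g(y)}{|x-y|^\mu}\,dy .
\]
The near part is bounded by $C R^{N-\mu}Mg(x)$, using a dyadic decomposition of the ball and the fact that $\mu<N$. The far part is bounded by Hölder's inequality as $C R^{N-\mu-N/t}\|g\|_t$; this requires $\mu t'>N$, which follows from $\frac1t+\frac\mu N<2$ with $r>1$.

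Next I would optimize in $R$, choosing $R=(\|g\|_t/Mg(x))^{t/N}$. This gives the pointwise Hedberg bound
\[
\Bigl|\frac1{|\cdot|^\mu}*g\Bigr|(x)\le C\,\|g\|_t^{1-\theta}\,(Mg(x))^{\theta},\qquad \theta=\frac{t}{q},\quad \frac1q=\frac1t+\frac{\mu}{N}-1 .
\]
By the Hardy--Littlewood maximal theorem ($t>1$), $\|Mg\|_t\le C\|g\|_t$. Hence the convolution is bounded in $L^q$ with $q=r'$, and Hölder's inequality against $h\in L^r$ finishes the general estimate. The exponent relation $\frac1t+\frac\mu N+\frac1r=2$ is exactly $\frac1q+\frac1r=1$.

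For the special case, I would check that with $N=2$ and $t=r=\frac4{4-\mu}$,
\[
\frac{2(4-\mu)}4+\frac\mu2=2 ,
\]
so the hypothesis holds. Taking $g=h=G$ gives the stated bound with $C_\mu=C(\tfrac4{4-\mu},2,\mu,\tfrac4{4-\mu})$. The left side is nonnegative when $G\ge0$, which is how it will be applied to $G=F(u)$; for signed $G$ one simply bounds the absolute value.

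The only delicate step is the dyadic control of the near part and the verification that all exponents lie in the admissible range. In particular one needs $t>1$ for the maximal theorem and $q<\infty$, which holds because $r>1$. I regard these as routine, and in the paper I would simply cite \cite{LiebLoss2001}.
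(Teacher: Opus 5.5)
Your sketch is correct, but it is not what the paper does. The paper gives no proof of Lemma~\ref{Lem2.3}; it simply quotes it from \cite{LiebLoss2001}. There the inequality is proved directly for the bilinear form by Lieb's layer-cake argument, and the sharp constant is identified in the diagonal case $t=r=\frac{2N}{2N-\mu}$. That is exactly the case $N=2$, $t=r=\frac{4}{4-\mu}$ used here, although the paper never needs sharpness.

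Your route is the Hedberg/maximal-function proof. It yields only a non-sharp constant, but it gives the operator bound $\bigl\|\,|x|^{-\mu}*g\bigr\|_{r'}\le C\|g\|_t$ directly. For $N=2$, $t=\frac{4}{4-\mu}$ one has $\frac1q=\frac{4-\mu}{4}+\frac{\mu}{2}-1=\frac{\mu}{4}$, so this is the bound $L^{4/(4-\mu)}(\R^2)\to L^{4/\mu}(\R^2)$. That is precisely the ``mapping form'' the paper invokes without proof in Step~3 of the proof of Theorem~\ref{Thm1.1}. From the bilinear statement alone, it would need a routine duality step.

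One small slip: the far-part H\"older estimate needs $\mu t'>N$. This is equivalent to $\frac1t+\frac{\mu}{N}>1$, not to $\frac1t+\frac{\mu}{N}<2$. By the exponent relation it is exactly $\frac1r<1$, i.e.\ $r>1$, so your conclusion stands.

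The rest of your bookkeeping is right:
the dyadic bound $CR^{N-\mu}Mg(x)$ using $\mu<N$, the choice of $R$, $\theta=t/q\in(0,1)$, $q=r'$, and the check $2\cdot\frac{4-\mu}{4}+\frac{\mu}{2}=2$.
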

We also recall the following Trudinger--Moser inequality from \cite{Cao1992CPDE}.
\begin{Pro}\label{Pro2.1}
If $\alpha>0$ and $u\in H^{1}(\R^{2})$, then
\[
\int_{\R^{2}}\bigl(e^{\alpha u^{2}}-1\bigr)\,dx<+\infty.
\]
Moreover, if $\alpha<4\pi$ and $\|u\|_{2}\le M<+\infty$, then there exists a constant $C_{1}=C_{1}(M,\alpha)>0$ such that
\[
\sup_{\|\nabla u\|_{2}\le1,\ \|u\|_{2}\le M}
\int_{\R^{2}}\bigl(e^{\alpha u^{2}}-1\bigr)\,dx
\le C_{1}.
\]
\end{Pro}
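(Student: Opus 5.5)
The estimate holds for every $u\in H^1(\R^2)$, so it is enough to prove it for nonnegative radial functions. I would first replace $u$ by $|u|$ and then by its Schwarz symmetrization $u^*$, the symmetric decreasing rearrangement. This leaves both sides of the statement admissible:
\begin{itemize}
\item equimeasurability of $u^*$ and $|u|$ gives $\|u^*\|_2=\|u\|_2$ and $\int_{\R^2}(e^{\alpha (u^*)^2}-1)\,dx=\int_{\R^2}(e^{\alpha u^2}-1)\,dx$, since $t\mapsto e^{\alpha t^2}-1$ is nonnegative, increasing on $[0,\infty)$ and vanishes at $0$;
\item the P\'olya--Szeg\H{o} inequality gives $\|\nabla u^*\|_2\le\|\nabla u\|_2$.
\end{itemize}
So for the quantitative part I may assume $u$ is radial and nonincreasing in $r=|x|$, with $\|\nabla u\|_2\le1$ and $\|u\|_2\le M$.

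\textbf{Step 1: the exterior region.} For such $u$ the elementary radial bound
\[
\pi r^2 u(r)^2\le \int_{B_r}u^2\,dx\le M^2
\]
gives $u(r)\le M/(\sqrt{\pi}\,r)$. Fix a radius $R_0$, say $R_0=1$. On $\{|x|\ge R_0\}$ the function $u$ is bounded by $M/\sqrt\pi$, so
\[
e^{\alpha u^2}-1\le C(\alpha,M)\,u^2 ,
\]
and the exterior contribution is at most $C(\alpha,M)\,M^2$.

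\textbf{Step 2: the ball $B_{R_0}$.} Here I would reduce to Moser's inequality on a bounded domain. Set $v=u-u(R_0)$ on $B_{R_0}$, so that $v\in H_0^1(B_{R_0})$ and $\|\nabla v\|_{L^2(B_{R_0})}\le 1$. Choose $\varepsilon>0$ with $\alpha(1+\varepsilon)\le 4\pi$; this is exactly where $\alpha<4\pi$ is used. Young's inequality gives
\[
u^2\le(1+\varepsilon)v^2+(1+\varepsilon^{-1})\,u(R_0)^2 .
\]
Since $u(R_0)\le M/\sqrt\pi$, we get
\[
\int_{B_{R_0}}e^{\alpha u^2}\,dx\le e^{\alpha(1+\varepsilon^{-1})M^2/\pi}\int_{B_{R_0}}e^{4\pi v^2}\,dx\le e^{\alpha(1+\varepsilon^{-1})M^2/\pi}\,C_{\mathrm{Moser}}\,|B_{R_0}|.
\]
Adding the two regions yields the uniform constant $C_1(M,\alpha)$.

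\textbf{Step 3: finiteness for every $\alpha>0$ and every $u\in H^1(\R^2)$.} I would use density of $C_c^\infty(\R^2)$ in $H^1(\R^2)$. Pick $\varphi\in C_c^\infty$ with $\|\nabla(u-\varphi)\|_2$ small, and use
\[
u^2\le 2(u-\varphi)^2+2\varphi^2 .
\]
Then Hölder's inequality splits $e^{\alpha u^2}$ into $e^{4\alpha (u-\varphi)^2}$ and $e^{4\alpha\varphi^2}$ factors. The function $\varphi$ is bounded with compact support. For $w=u-\varphi$, rescaling $w/\|\nabla w\|_2$ turns $4\alpha w^2$ into an exponent $\alpha' (w/\|\nabla w\|_2)^2$ with $\alpha'<4\pi$, so Steps 1--2 apply. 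The subtraction of $1$ is handled on the set where $u$ is small, using $e^{\alpha t^2}-1\le C t^2$ for bounded $t$.

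\textbf{Main obstacle.} The delicate point is the ball estimate in Step 2, which rests on the sharp bounded-domain Moser inequality together with the $\varepsilon$-splitting that absorbs the boundary value $u(R_0)$. The strict inequality $\alpha<4\pi$ is essential there; it is not an artifact of the method, because the uniform bound fails at $\alpha=4\pi$ under the Dirichlet-norm normalization.
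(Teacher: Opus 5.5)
Your proof is correct. The paper gives no argument of its own for Proposition~\ref{Pro2.1}; it imports the result from Cao~\cite{Cao1992CPDE}. What you wrote is the standard proof of that cited result: density of $C_c^\infty(\R^2)$ gives the qualitative finiteness, and the uniform bound comes from reducing to $u^*$ via P\'olya--Szeg\H{o}, using the radial decay $u(r)\le M/(\sqrt{\pi}\,r)$ on $\{|x|\ge1\}$, and applying Moser's inequality on $B_1$ to $u-u(1)$ after the $\varepsilon$-Young splitting that uses up the gap $4\pi-\alpha$. The one step worth writing out in full is in Step 3: the Cauchy--Schwarz splitting must be done on a finite-measure set such as $\{|u|\ge1\}$, where $\int_{\{|u|\ge1\}}e^{4\alpha w^2}\,dx\le|\{|u|\ge1\}|+\int_{\R^2}(e^{4\alpha w^2}-1)\,dx$. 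Your remark about handling the subtraction of $1$ where $u$ is small already points to exactly this.
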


\begin{Lem}\label{Lem2.4}
Assume \((V_1)\)--\((V_2)\) and \((f_1)\)--\((f_4)\). Then the functional \(I\)
is well defined on \(E\) and \(I\in C^1(E,\R)\). Moreover,
\[
I'(u)[v]
=(u,v)
-\int_{\R^{2}}\left(\frac{1}{|x|^{\mu}}*F(u)\right) f(u)\,v\,dx
\]
for all \(u,v\in E\).
\end{Lem}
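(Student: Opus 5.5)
The plan is to first obtain pointwise growth bounds on \(f\) and \(F\), then use the Trudinger--Moser inequality of Proposition~\ref{Pro2.1} to put \(F(u)\) and \(f(u)v\) in \(L^{p_\mu}(\R^2)\), \(p_\mu=\frac4{4-\mu}\), and finally apply the Hardy--Littlewood--Sobolev inequality of Lemma~\ref{Lem2.3}. By \((f_1)\) and the Remark following the hypotheses, \(f(t)=o(t)\) as \(t\to0^+\). Hence for every \(\varepsilon>0\), every \(q\ge1\) and every \(\alpha>4\pi\) there is \(C_\varepsilon>0\) with
\[
|f(t)|\le \varepsilon|t|+C_\varepsilon|t|^{q-1}\bigl(e^{\alpha t^2}-1\bigr),\qquad
|F(t)|\le \varepsilon|t|^2+C_\varepsilon|t|^{q}\bigl(e^{\alpha t^2}-1\bigr),
\]
for all \(t\in\R\). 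Since \(f\equiv0\) on \((-\infty,0]\), only \(t>0\) needs checking.

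For \(u\in E\subset H^1(\R^2)\) I estimate \(\|F(u)\|_{p_\mu}\).
\begin{itemize}
\item The quadratic part gives \(\|u\|_{2p_\mu}^2\), which is finite because \(E\hookrightarrow L^r\) for all \(r\ge2\).
\item For the exponential part, Hölder's inequality gives
\[
\int |u|^{qp_\mu}\bigl(e^{\alpha u^2}-1\bigr)^{p_\mu}\le \|u\|_{2qp_\mu}^{qp_\mu}\Bigl(\int \bigl(e^{2\alpha p_\mu u^2}-1\bigr)\Bigr)^{1/2}.
\]
Here I use the elementary inequality \((e^{a}-1)^r\le e^{ra}-1\) for \(r\ge1\).
\item The last integral is finite by the first part of Proposition~\ref{Pro2.1}, which holds for every \(\alpha>0\) with no size restriction.
\end{itemize}
So \(F(u)\in L^{p_\mu}\), and Lemma~\ref{Lem2.3} shows that \(I(u)\) is finite. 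The same argument gives
\[
\|f(u)v\|_{p_\mu}\le C(u)\,\|v\|,
\]
so the formula for \(I'(u)[v]\) defines a bounded linear functional. Gateaux differentiability then follows from the mean value theorem and dominated convergence, applied to \(t\mapsto \int (|x|^{-\mu}*F(u+tv))F(u+tv)\). The convergence is dominated using the same exponential bounds applied to \(|u|+|v|\).

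The main obstacle is \textbf{continuity of \(u\mapsto I'(u)\)}, since the exponential nonlinearity is not locally uniformly controlled in \(H^1\) by Proposition~\ref{Pro2.1} alone. Take \(u_n\to u\) in \(E\). I would use the following uniform bound.
\begin{itemize}
\item Write \(u_n^2\le (1+\delta)|u_n-u|^2+C_\delta u^2\).
\item Since \(\|\nabla(u_n-u)\|_2\to0\), for any fixed \(\beta\) the quantity \(\beta(1+\delta)\|\nabla(u_n-u)\|_2^2\) is eventually below \(4\pi\). The second part of Proposition~\ref{Pro2.1}, applied to the normalized function \((u_n-u)/\|\nabla(u_n-u)\|_2\), then bounds \(\int(e^{\beta(1+\delta)(u_n-u)^2}-1)\) uniformly in \(n\).
\item By Hölder (splitting \(e^{a+b}-1\) suitably), this gives \(\sup_n\int(e^{\beta u_n^2}-1)<\infty\).
\end{itemize}

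Consequently \(F(u_n)\) and \(f(u_n)\) are bounded in suitable \(L^r\) spaces. Along subsequences \(u_n\to u\) a.e., and Vitali's theorem (uniform integrability from the higher integrability plus tightness from the \(L^r\) convergence of \(u_n\)) gives \(F(u_n)\to F(u)\) in \(L^{p_\mu}\). Likewise \(f(u_n)\to f(u)\) in \(L^{r}\) for an \(r\) chosen so that \(\|f(u_n)v\|_{p_\mu}\le\|f(u_n)\|_r\|v\|_{r'}\) with \(r'\ge2\).

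To conclude, split
\[
\bigl(|x|^{-\mu}*F(u_n)\bigr)f(u_n)-\bigl(|x|^{-\mu}*F(u)\bigr)f(u)
\]
into two differences and apply Lemma~\ref{Lem2.3} to each. This yields \(\sup_{\|v\|\le1}|I'(u_n)[v]-I'(u)[v]|\to0\). Since the limit is independent of the subsequence, the whole sequence converges, and \(I\in C^1(E,\R)\).
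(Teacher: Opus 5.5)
Your overall route is the paper's: growth bounds on $f$ and $F$, Proposition~\ref{Pro2.1} to put $F(u)$ and $f(u)v$ in $L^{p_\mu}$, Lemma~\ref{Lem2.3} for the Choquard term, and, for continuity of $I'$, a uniform exponential bound from splitting $u_n=u+z_n$, followed by Vitali plus tightness. Two differences are harmless variations: your exponents $\varepsilon|t|$ and $\varepsilon|t|^2$ in place of $\varepsilon|t|^{(2-\mu)/2}$ and $\varepsilon|t|^{(4-\mu)/2}$, and your constant-free inequality $(e^a-1)^r\le e^{ra}-1$.

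One step fails as written: the uniform bound on $\int(e^{\beta(1+\delta)z_n^2}-1)$ with $z_n=u_n-u$. You normalize by $w_n=z_n/\|\nabla z_n\|_2$. The second part of Proposition~\ref{Pro2.1} requires $\|w_n\|_2\le M$ uniformly, but $\|w_n\|_2=\|z_n\|_2/\|\nabla z_n\|_2$ need not stay bounded even though $z_n\to0$ in $E$. The Dirichlet integral is dilation invariant in $\R^2$, so spreading profiles break the bound. Take $z_n=\varepsilon_n\phi(x/R_n)$ with $\phi\in C_c^\infty(\R^2)$, $R_n\to\infty$, and $\varepsilon_n\to0$ fast enough to absorb the $V$-term and the Gagliardo term. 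Then $z_n\to0$ in $E$, while $\|z_n\|_2/\|\nabla z_n\|_2=R_n\|\phi\|_2/\|\nabla\phi\|_2\to\infty$. So the constant $C_1(M,\alpha)$ you invoke is not available.

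The repair is easy. Rescale by a fixed constant instead of the gradient norm, as the paper does. For fixed $\gamma>0$ and fixed $\alpha\in(0,4\pi)$, set $w_n=(\gamma/\alpha)^{1/2}z_n$. Then $\|\nabla w_n\|_2\le1$ for $n\ge n_0$ and $\|w_n\|_2\to0$, so Proposition~\ref{Pro2.1} gives
$\sup_{n\ge n_0}\int(e^{\gamma z_n^2}-1)\,dx=\sup_{n\ge n_0}\int(e^{\alpha w_n^2}-1)\,dx\le C_1$.
The finitely many indices $n<n_0$ are covered by the first part of Proposition~\ref{Pro2.1}. Normalizing by $\|z_n\|_{H^1(\R^2)}$, as in Lemma~\ref{Lem3.3}, also works, since then both $\|\nabla w_n\|_2\le1$ and $\|w_n\|_2\le1$. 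With this fix, the rest of your argument (the H\"older splitting of $e^{A+B}-1$, Vitali with tightness, and the two-term HLS estimate) goes through and coincides with the paper's proof.
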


\begin{proof}
We give the details only for the Choquard term. Let \(p_\mu=4/(4-\mu)\).
The growth assumptions imply that, for every \(\alpha>4\pi\), every \(q>2\),
and every \(\varepsilon>0\), there exists \(C>0\) such that
\begin{equation}\label{eq2.1}
|F(t)|\le \varepsilon |t|^{\frac{4-\mu}{2}}
+C|t|^q(e^{\alpha t^2}-1),
\end{equation}
and
\begin{equation}\label{eq2.2}
|f(t)|\le \varepsilon |t|^{\frac{2-\mu}{2}}
+C|t|^{q-1}(e^{\alpha t^2}-1)
\end{equation}
for all \(t\in\R\). Note that near the origin, these estimates follow from \((f_1)\). Indeed, since \(f\in C^1(\R,\R)\) and \(f(t)=0\) for \(t\le0\), we have \(f(0)=f'(0)=0\). Hence \(f(t)=o(t)\) and \(F(t)=o(t^2)\) as \(t\to0^+\). Since \((2-\mu)/2<1\) and \((4-\mu)/2<2\), this gives the required smallness at the origin. The estimates at infinity follow from the critical upper growth in \((f_1)\).

We first prove the following consequence of strong convergence in \(E\). If
\(u_n\to u\) in \(E\), then for every \(\gamma>0\) there exists \(r>1\) such that
\begin{equation}\label{eq2.3}
\sup_n\int_{\R^2}\bigl(e^{\gamma r u_n^2}-1\bigr)\,dx<+\infty .
\end{equation}
Indeed, fix \(\eta>0\) and let \(r>1\) and set \(z_n=u_n-u\). Using H\"older's inequality with exponents \(\rho,\rho'>1\) along with the following inequalities
\[
u_n^2\le (1+\eta)u^2+(1+\eta^{-1})z_n^2
\]
and
\[
e^{A+B}-1=(e^A-1)(e^B-1)+(e^A-1)+(e^B-1),\qquad A,B\ge0,
\]
we get
\[
\begin{aligned}
\int_{\R^2}(e^{\gamma r u_n^2}-1)\,dx
&\le C\|e^{\gamma r(1+\eta)u^2}-1\|_{\rho}
      \|e^{\gamma r(1+\eta^{-1})z_n^2}-1\|_{\rho'} \\
&\quad +C\int_{\R^2}(e^{\gamma r(1+\eta)u^2}-1)\,dx
      +C\int_{\R^2}(e^{\gamma r(1+\eta^{-1})z_n^2}-1)\,dx .
\end{aligned}
\]
Since \((e^t-1)^m\le C_m(e^{mt}-1)\) for \(t\ge0\) and
\(m>1\), by Proposition~\ref{Pro2.1} we conclude that
\[
e^{\gamma r(1+\eta)u^2}-1\in L^\rho(\R^2)
\]
for the fixed function \(u\). Since \(z_n\to0\) in \(H^1(\R^2)\), then for all
large \(n\), the functions
\[
w_n=\bigl(\gamma r\rho'(1+\eta^{-1})\bigr)^{1/2}z_n
\]
satisfy \(\|\nabla w_n\|_2\le1\) and have uniformly bounded \(L^2\)-norms.
Thus, the Trudinger--Moser inequality yields
\[
\sup_{n\ge n_0}\int_{\R^2}
\bigl(e^{\gamma r\rho'(1+\eta^{-1})z_n^2}-1\bigr)\,dx<+\infty .
\]
This controls the terms involving \(z_n\) uniformly for \(n\ge n_0\), while the
finitely many indices \(n<n_0\) are controlled by Proposition~\ref{Pro2.1}.
Therefore, the estimate \eqref{eq2.3} holds.

Let us now assume \(u\in E\). Applying \eqref{eq2.1}, the identity
\(p_\mu(4-\mu)/2=2\), H\"older's inequality and the Trudinger--Moser inequality,
we obtain
\begin{equation}\label{eq2.4}
    F(u)\in L^{p_\mu}(\R^2).
\end{equation}
Moreover, if \(v\in E\), then \(f(u)v\in L^{p_\mu}(\R^2)\). Indeed, set \(a=\frac4{2-\mu},
\quad
b=2.\)
Since
\[
\frac1{p_\mu}=\frac1a+\frac1b,
\qquad
a\frac{2-\mu}{2}=2,
\]
the growth estimate \eqref{eq2.2}, the Trudinger--Moser inequality,
and the embedding \(E\hookrightarrow L^p(\R^2)\) for every \(p\ge2\) imply that $f(u)\in L^a(\R^2).$
Since \(E\hookrightarrow L^2(\R^2)\), we obtain
\[
\|f(u)v\|_{p_\mu}
\le
\|f(u)\|_a\|v\|_2
\le C\|f(u)\|_a\|v\|<+\infty .
\]
Now, by Lemma~\ref{Lem2.3}, the functional \(I\) is well-defined, and is G\^ateaux differentiable. It remains to prove the continuity of the derivative. Let \(u_n\to u\) in \(E\).
By \eqref{eq2.1}, \eqref{eq2.3}, Vitali's theorem and the
strong convergence \(u_n\to u\) in every \(L^p(\R^2)\), \(2\le p<+\infty\), we get
\begin{equation}\label{eq2.5}
F(u_n)\to F(u)\qquad\text{in }L^{p_\mu}(\R^2).
\end{equation}
Similarly, with \(a=\frac4{2-\mu},\) we have
\[
f(u_n)\to f(u)\qquad\text{in }L^a(\R^2).
\]
Since \(1/p_\mu=1/a+1/2\), H\"older's inequality and the continuous embedding
\(E\hookrightarrow L^2(\R^2)\) yield
\[
\begin{aligned}
\sup_{\|v\|\le1}\|(f(u_n)-f(u))v\|_{p_\mu}
&\le
\|f(u_n)-f(u)\|_a\sup_{\|v\|\le1}\|v\|_2  \\
&\le C\|f(u_n)-f(u)\|_a\to0.
\end{aligned}
\]
Using \eqref{eq2.5} and Lemma~\ref{Lem2.3}, we conclude that
\(I'(u_n)\to I'(u)\) in \(E'\). Thus \(I\in C^1(E,\R)\).
\end{proof}

We now define the Nehari manifold as
\[
\mathcal{N}
:=\bigl\{u\in E\setminus\{0\} : I'(u)[u]=0\bigr\}.
\]
The following result is a standard consequence of the Lions-type concentration--compactness for the Trudinger--Moser inequality.
\begin{Lem}\label{Lem2.5}
Let $(u_n)\subset H^{1}(\R^{2})$ with $u_n\rightharpoonup u$ in $H^{1}(\R^{2})$.
Assume \(\limsup_{n\to\infty}\|\nabla u_n\|_{2}^{2}<1.\) Then there exist $q>1$ and $C>0$ such that
\[
\sup_{n\in\N}\int_{\R^{2}}\bigl(e^{4\pi q\,u_n^{2}}-1\bigr)\,dx\le C.
\]
\end{Lem}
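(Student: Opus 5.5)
The statement follows from the uniform Trudinger--Moser bound in Proposition~\ref{Pro2.1}, after rescaling each $u_n$ by a fixed constant rather than by its own gradient norm. The weak convergence is used only to get boundedness of $(u_n)$ in $H^1(\R^2)$; no concentration--compactness argument is needed.

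\emph{Choice of constants.} Since $u_n\rightharpoonup u$ in $H^1(\R^2)$, the sequence is bounded, so $\sup_n\|u_n\|_2\le M<+\infty$. Using $\limsup_{n\to\infty}\|\nabla u_n\|_2^2<1$, fix $m\in(0,1)$ and $n_0\in\N$ such that $\|\nabla u_n\|_2^2\le m$ for all $n\ge n_0$. Then choose $q>1$ with $qm<1$ (for instance $q=\frac{1}{2}(1+m^{-1})$), and set $\alpha:=4\pi qm<4\pi$.

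\emph{Tail of the sequence.} For $n\ge n_0$ put $v_n:=u_n/\sqrt{m}$. Then $\|\nabla v_n\|_2\le 1$ and $\|v_n\|_2\le M/\sqrt{m}=:M'$. Since $4\pi q\,u_n^2=\alpha v_n^2$, the second part of Proposition~\ref{Pro2.1} applied with $\alpha<4\pi$ and $M'$ gives
\[
\sup_{n\ge n_0}\int_{\R^2}\bigl(e^{4\pi q u_n^2}-1\bigr)\,dx
=\sup_{n\ge n_0}\int_{\R^2}\bigl(e^{\alpha v_n^2}-1\bigr)\,dx\le C_1(M',\alpha).
\]
The point of dividing by $\sqrt m$ is that $\|v_n\|_2$ stays bounded. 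Normalizing instead by $\|\nabla u_n\|_2$ could make the $L^2$-norms blow up when $\|\nabla u_n\|_2$ is small, which is the only pitfall here.

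\emph{Finitely many initial terms and conclusion.} For each $n<n_0$, the first part of Proposition~\ref{Pro2.1} (with $\alpha=4\pi q$) gives $\int_{\R^2}(e^{4\pi q u_n^2}-1)\,dx<+\infty$. Taking $C$ to be the maximum of $C_1(M',\alpha)$ and these finitely many integrals proves the lemma.
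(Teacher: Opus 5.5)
Your proof is correct and is essentially the paper's argument: you rescale by the fixed factor $1/\sqrt{m}$ (the paper's $\sqrt{\beta}$ with $\beta\rho<1$), apply the uniform Trudinger--Moser bound of Proposition~\ref{Pro2.1} with an exponent below $4\pi$ for $n\ge n_0$, and absorb the finitely many initial indices using the first part of that proposition. The only difference is bookkeeping: you fix $q$ first and set $\alpha=4\pi qm$, whereas the paper fixes $\beta$ and then takes $\alpha$ close to $4\pi$ so that $q=\alpha\beta/(4\pi)>1$.
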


\begin{proof}
Fix \(\rho=\limsup_{n\to\infty}\|\nabla u_n\|_2^2<1\) and choose
 $\beta>1$ such that $\beta\rho<1$.
Then there exists $n_0\in\N$ such that for all $n\ge n_0$,
\[
\|\nabla(\sqrt{\beta}\,u_n)\|_2^2=\beta\|\nabla u_n\|_2^2\le 1.
\]
Since $(u_n)$ is bounded in $H^{1}(\R^{2})$, there exists $M>0$ such that for all $n$,
\[
\|\sqrt{\beta}\,u_n\|_2\le M.
\]
Fix $\alpha\in(0,4\pi)$.
Applying Proposition~\ref{Pro2.1} to $v_n=\sqrt{\beta}\,u_n$ yields a constant $C_\alpha>0$ such that
\[
\sup_{n\ge n_0}\int_{\R^{2}}\bigl(e^{\alpha \beta\,u_n^{2}}-1\bigr)\,dx
=\sup_{n\ge n_0}\int_{\R^{2}}\bigl(e^{\alpha v_n^{2}}-1\bigr)\,dx
\le C_\alpha.
\]
Choose $\alpha$ sufficiently close to $4\pi$ so that
\[
q=\frac{\alpha\beta}{4\pi}>1.
\]
Then $4\pi q=\alpha\beta$ and
\[
e^{4\pi q\,u_n^{2}}-1=e^{\alpha\beta\,u_n^{2}}-1.
\]
Thus, the desired uniform bound holds for all $n\ge n_0$.
Absorbing the finitely many indices $n<n_0$ into the constant, we obtain the conclusion.
\end{proof}

\begin{Lem}\label{Lem2.6}
The maps $u\mapsto u^{+}$ and $u\mapsto u^{-}$ are continuous from $E$ to $E$.
In particular, if $u_n\to u$ in $E$, then $u_n^{\pm}\to u^{\pm}$ in $E$.
\end{Lem}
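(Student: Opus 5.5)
We argue componentwise on the three pieces of the norm of \(E\), using the convention \(u^{+}=\max\{u,0\}\), \(u^{-}=\max\{-u,0\}\), so that \(u=u^{+}-u^{-}\). Since \(u^{-}=(-u)^{+}\) and \(u\mapsto -u\) is an isometry of \(E\), it suffices to treat \(u\mapsto u^{+}\).

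\emph{The map is well defined.} For \(u\in E\) we have \(u^{+}\in H^{1}(\R^{2})\) with \(\nabla u^{+}=\chi_{\{u>0\}}\nabla u\) a.e., and \(V(u^{+})^{2}\le Vu^{2}\). For the Gagliardo part, the pointwise inequality \(|a^{+}-b^{+}|\le|a-b|\) gives \([u^{+}]_{s}\le[u]_{s}\). Hence \(\|u^{+}\|\le\|u\|\) and \(u^{+}\in E\).

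\emph{Continuity.} Let \(u_n\to u\) in \(E\); we show \(\|u_n^{+}-u^{+}\|\to0\) term by term, after passing to an arbitrary subsequence with \(u_n\to u\) a.e. and with \(L^{1}\)-dominating functions.

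\emph{Potential term.} We have \(|u_n^{+}-u^{+}|\le|u_n-u|\), so \(\int V|u_n^{+}-u^{+}|^{2}\le\int V|u_n-u|^{2}\to0\).

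\emph{Fractional term.} Set \(w_n(x,y)=(u_n^{+}(x)-u_n^{+}(y))|x-y|^{-1-s}\) and define \(w\) analogously from \(u\). Then \(w_n\to w\) a.e. on \(\R^{4}\), since \(u_n\to u\) a.e. and the positive part is continuous. Moreover \(|w_n|\le|U_n|\), where \(U_n(x,y)=(u_n(x)-u_n(y))|x-y|^{-1-s}\) converges to \(U\) in \(L^{2}(\R^{4})\). By the generalized dominated convergence theorem (or Brezis--Lieb combined with \(\|w_n\|_{2}\le\|U_n\|_{2}\) and a.e. convergence), \(w_n\to w\) in \(L^{2}(\R^{4})\). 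Alternatively, the Lipschitz bound \(|w_n-w|\le|U_n-U|+|U|\) combined with dominated convergence along subsequences also works: pass to a subsequence with \(\sum\|U_n-U\|_{2}<\infty\), so that \(\sup_n|U_n|\in L^{2}\) is a dominating function.

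\emph{Gradient term.} This is the main obstacle, since \(\nabla u_n^{+}=\chi_{\{u_n>0\}}\nabla u_n\) and the indicators need not converge on \(\{u=0\}\). Write
\[
\nabla u_n^{+}-\nabla u^{+}=\chi_{\{u_n>0\}}(\nabla u_n-\nabla u)+\bigl(\chi_{\{u_n>0\}}-\chi_{\{u>0\}}\bigr)\nabla u .
\]
The first term tends to \(0\) in \(L^{2}\). For the second, use that \(\nabla u=0\) a.e. on \(\{u=0\}\) (Stampacchia). On \(\{u\neq0\}\), a.e. convergence \(u_n\to u\) gives \(\chi_{\{u_n>0\}}\to\chi_{\{u>0\}}\) a.e. Hence the integrand tends to \(0\) a.e. and is dominated by \(|\nabla u|^{2}\), so dominated convergence applies.

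\emph{Conclusion.} Since every subsequence has a further subsequence along which \(u_n^{+}\to u^{+}\) in \(E\), the whole sequence converges. The claim for \(u^{-}=u^{+}-u\) follows immediately.
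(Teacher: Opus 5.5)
Your proof is correct. It follows the same componentwise skeleton as the paper, and the potential term is handled identically via $|u_n^\pm-u^\pm|\le|u_n-u|$. The difference lies in the gradient and Gagliardo terms.

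The paper disposes of both in one line by invoking the uncited ``standard fact'' that $u\mapsto\Phi(u)$ is continuous on $H^1(\R^2)$ and on $H^s(\R^2)$ for every Lipschitz $\Phi$ with $\Phi(0)=0$. For $H^1$ and a general Lipschitz $\Phi$ this is a genuinely nontrivial theorem of Marcus and Mizel. The difficulty is that $\Phi'$ is merely bounded and measurable, so $\Phi'(u_n)$ need not converge to $\Phi'(u)$ pointwise.

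You instead prove the needed statement directly for $\Phi(t)=t^+$. Here the only kink is at $0$, and Stampacchia's $\nabla u=0$ a.e.\ on $\{u=0\}$ makes the indicator mismatch harmless, so plain dominated convergence suffices. For the Gagliardo part you supply the generalized dominated convergence argument on $\R^4$ that the paper leaves implicit. You also check $\|u^+\|\le\|u\|$, i.e.\ that the maps actually land in $E$, which the paper takes for granted. Your route is self-contained and elementary but tailored to $t^{\pm}$; the paper's is shorter but rests on a general result it does not cite.

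One small point: your parenthetical Brezis--Lieb alternative does not close on its own. The bound $\|w_n\|_2\le\|U_n\|_2$ only gives $\limsup_n\|w_n\|_2\le\|U\|_2$, not $\limsup_n\|w_n\|_2\le\|w\|_2$. That step is carried by the generalized dominated convergence theorem, or by your summable-subsequence domination. Likewise, your bound $|w_n-w|\le|U_n-U|+|U|$ relies on the monotonicity of $t\mapsto t^+$, not just on its Lipschitz property. The cruder bound $|w_n-w|\le|U_n|+|U|$ already suffices for that argument.
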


\begin{proof}
Let $\Phi_{+}(t)=t^{+}$ and $\Phi_{-}(t)=t^{-}$.
Both maps are Lipschitz on $\R$ and satisfy $\Phi_{\pm}(0)=0$.
It is a standard fact that if $\Phi:\R\to\R$ is Lipschitz and $\Phi(0)=0$, then the associated Nemytskii operator $u\mapsto \Phi(u)$ is continuous on $H^{1}(\R^{2})$ and also on $H^{s}(\R^{2})$ for every $s\in(0,1)$.
Therefore, if $u_n\to u$ in $E$, then we have
\[
u_n^\pm\to u^\pm \quad \text{in } H^{1}(\R^{2})
\qquad\text{and}\qquad
u_n^\pm\to u^\pm \quad \text{in } H^{s}(\R^{2}).
\]
Moreover, since $\Phi_{\pm}$ is $1$-Lipschitz, we get
\[
|u_n^\pm-u^\pm|\le |u_n-u|
\quad \text{a.e. in } \R^{2}.
\]
Thus, we obtain
\[
\int_{\R^{2}}V(x)\,|u_n^\pm-u^\pm|^{2}\,dx
\le
\int_{\R^{2}}V(x)\,|u_n-u|^{2}\,dx.
\]
Combining the convergence in $H^{1}(\R^{2})$ and $H^{s}(\R^{2})$ with the above weighted $L^{2}$ estimate, we conclude that
\[
u_n^\pm\to u^\pm \quad \text{in } E.
\]
This proves the claim.
\end{proof}

\subsection{Critical compactness below a threshold level}
We define the compactness threshold level $\ell_*>0$ by
\[
\ell_*:=\left(\frac12-\frac{1}{2\theta}\right)\frac{1}{p_\mu},
\]
where \(p_\mu=\frac{4}{4-\mu}.\)
\begin{Lem}\label{Lem2.7}
Assume \((V_1)\)--\((V_2)\) and \((f_1)\)--\((f_4)\). Let \((u_n)\subset E\) be a Palais--Smale sequence for \(I\) at level \(\ell\), that is,
\[
I(u_n)\to \ell,
\qquad
\|I'(u_n)\|_{E'}\to 0.
\]
Assume, in addition, that \(u_n\ge0\) a.e. in \(\R^2\) for all \(n\). If
\[
\ell<\ell_*,
\]
then \((u_n)\) is precompact in \(E\).
\end{Lem}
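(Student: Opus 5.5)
We follow the standard three-step scheme: boundedness of the sequence, a uniform exponential bound obtained from the energy level, and strong convergence via the compact embedding of $E$.

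\textbf{Step 1: boundedness.} Since $u_n\ge0$ and $(f_3)$ gives $\theta F(u_n)\le f(u_n)u_n$, we have
\[
I(u_n)-\tfrac{1}{2\theta}I'(u_n)[u_n]
=a_\theta\|u_n\|^2+\tfrac{1}{2\theta}\int_{\R^2}\Bigl(\tfrac{1}{|x|^\mu}*F(u_n)\Bigr)\bigl(f(u_n)u_n-\theta F(u_n)\bigr)dx
\ge a_\theta\|u_n\|^2 .
\]
Hence $a_\theta\|u_n\|^2\le \ell+1+o(1)\|u_n\|$, so $(u_n)$ is bounded in $E$. More precisely, $\limsup_n a_\theta\|u_n\|^2\le \ell$. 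Passing to a subsequence, $u_n\rightharpoonup u$ in $E$. By the compact embedding, $u_n\to u$ in $L^p(\R^2)$ for every $p\in[2,\infty)$ and a.e.

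\textbf{Step 2: uniform Trudinger--Moser bound.} From Step 1,
\[
\limsup_n\|\nabla u_n\|_2^2\le\limsup_n\|u_n\|^2\le \ell/a_\theta<1/p_\mu<1 .
\]
Lemma~\ref{Lem2.5} then gives $q>1$ with $\sup_n\int_{\R^2}(e^{4\pi q u_n^2}-1)\,dx<\infty$. The threshold $\ell_*$ actually provides the stronger bound $p_\mu\|\nabla u_n\|_2^2<1$ asymptotically. This extra room is what allows the exponent $4\pi p_\mu$ to be treated, and the main obstacle is exploiting it. We need $F(u_n)$ bounded in $L^{p_\mu}$ with some room to spare, i.e.\ bounded in $L^{p_\mu r}$ for the exponential part. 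By \eqref{eq2.1} with $\alpha$ slightly above $4\pi$, this reduces to
\[
\int_{\R^2}\bigl(e^{\alpha p_\mu r u_n^2}-1\bigr)\,|u_n|^{qp_\mu r}\,dx .
\]
We estimate it by H\"older and the inequality $(e^t-1)^m\le C(e^{mt}-1)$. Since $\alpha p_\mu r\,\|\nabla u_n\|_2^2<4\pi$ for $\alpha,r$ close to $4\pi$ and $1$, applying Proposition~\ref{Pro2.1} to the rescaled functions gives uniform bounds. The same argument with \eqref{eq2.2} yields uniform bounds for $f(u_n)$ in $L^{a r}$ with $a=4/(2-\mu)$ and some $r>1$.

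\textbf{Step 3: convergence of the nonlinear terms.} Uniform integrability plus a.e.\ convergence, via Vitali's theorem as in \eqref{eq2.5}, give $F(u_n)\to F(u)$ in $L^{p_\mu}$. Tightness at infinity comes from the small-$t$ part of \eqref{eq2.1} together with the strong $L^p$ convergence. Similarly, $f(u_n)\to f(u)$ in $L^a$. Combining these with Lemma~\ref{Lem2.3} and H\"older's inequality $\|f(u_n)(u_n-u)\|_{p_\mu}\le\|f(u_n)\|_a\|u_n-u\|_2$, we obtain
\[
\int_{\R^2}\Bigl(\tfrac{1}{|x|^\mu}*F(u_n)\Bigr)f(u_n)(u_n-u)\,dx\to0,
\qquad
\int_{\R^2}\Bigl(\tfrac{1}{|x|^\mu}*F(u)\Bigr)f(u)(u_n-u)\,dx\to0 .
\]
Hence
\[
\|u_n-u\|^2=I'(u_n)[u_n-u]-I'(u)[u_n-u]+o(1)\to0,
\]
using $I'(u_n)\to0$ and $u_n-u\rightharpoonup0$. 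This gives precompactness.

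The delicate point is Step 2. The level bound must translate into $\limsup p_\mu\|\nabla u_n\|_2^2<1$ so that the Choquard term, whose exponent is effectively multiplied by $p_\mu$, stays subcritical. If Step 1's energy identity is not sharp enough, we would refine it by subtracting the weak limit (a Brezis--Lieb type splitting) before applying Lemma~\ref{Lem2.5}.
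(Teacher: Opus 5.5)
Your Steps 1 and 2 agree with the paper. Step 3, however, relies on a bound that the threshold does not give: $\sup_n\|f(u_n)\|_{L^a}<\infty$ (and $f(u_n)\to f(u)$ in $L^a$) with $a=\frac{4}{2-\mu}$. Because $f$ grows like $e^{4\pi t^2}$, $|f(u_n)|^{a}$ behaves like $e^{4\pi a u_n^2}$, and $a>2>p_\mu$. The Trudinger--Moser argument of your Step 2 would therefore need $\limsup_n\|\nabla u_n\|_2^2<\frac1a=\frac{2-\mu}{4}<\frac12$. But $\ell<\ell_*$ only yields $\limsup_n\|\nabla u_n\|_2^2<\frac1{p_\mu}=\frac{4-\mu}{4}$.

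This is a real obstruction, not constant-chasing. Take the standard Moser function $w_k$, with $\|\nabla w_k\|_2=1$ and $w_k=\sqrt{\log k/(2\pi)}$ on $B_{1/k}$. Then $u_k=w_k/\sqrt2$ has $\|\nabla u_k\|_2^2=\frac12<\frac1{p_\mu}$. Yet, by the lower bound in $(f_1)$, $\int_{B_{1/k}}|f(u_k)|^a\,dx\ge c\,k^{a\beta/(4\pi)-2}\to\infty$ for any $\beta\in(2\pi(2-\mu),4\pi)$. So the estimate $\|f(u_n)(u_n-u)\|_{p_\mu}\le\|f(u_n)\|_a\|u_n-u\|_2$ is unjustified, and your key limit $\int_{\R^2}(\frac1{|x|^\mu}*F(u_n))f(u_n)(u_n-u)\,dx\to0$ rests on it. For the fixed limit $u$, $f(u)\in L^a$ is fine, as in Lemma~\ref{Lem2.4}; what fails is uniformity in $n$.

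The repair is never to leave the exponent $p_\mu$. By \eqref{eq2.9}, $(e^t-1)^{m}\le C(e^{mt}-1)$ and H\"older,
\[
\int_{\R^2}|f(u_n)|^{p_\mu}|u_n-u|^{p_\mu}\,dx\le C\|u_n\|_2^{\frac{p_\mu(2-\mu)}{2}}\|u_n-u\|_2^{p_\mu}+C\Bigl(\int_{\R^2}\bigl(e^{\alpha\sigma p_\mu u_n^2}-1\bigr)dx\Bigr)^{\frac1\sigma}\Bigl(\int_{\R^2}|u_n|^{(q-1)p_\mu\sigma'}|u_n-u|^{p_\mu\sigma'}dx\Bigr)^{\frac1{\sigma'}}.
\]
The right-hand side tends to $0$ using only the exponent $\alpha\sigma p_\mu$ from your Step 2 and the strong $L^p$ convergence. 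With this replacement your direct computation of $\|u_n-u\|^2$ goes through. It then needs only boundedness, not convergence, of $F(u_n)$ in $L^{p_\mu}$, which makes it a slightly shorter route than the paper's.

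The paper sidesteps the $L^a$ issue differently. It proves only $F(u_n)\to F(u)$ and $f(u_n)u_n\to f(u)u$ in $L^{p_\mu}(\R^2)$, and treats $f(u_n)\varphi$ only for compactly supported $\varphi$. It obtains $I'(u)=0$ through Lemma~\ref{Lem2.2} and concludes from $\|u_n\|\to\|u\|$. Your closing Brezis--Lieb remark is unnecessary; Step 1 is already sharp enough.
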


\begin{proof}
Set
\[
a=\frac12-\frac{1}{2\theta}>0.
\]
Since \(u_n\ge0\) a.e. and \((f_3)\) holds on \([0,+\infty)\), one has
\[
\frac{1}{2\theta}f(u_n)u_n-\frac12 F(u_n)\ge 0
\quad\text{a.e. in }\R^2.
\]
Therefore,
\[
\begin{aligned}
I(u_n)-\frac{1}{2\theta}I'(u_n)[u_n]
&=a\|u_n\|^{2}
+\int_{\R^{2}}\left(\frac{1}{|x|^{\mu}}*F(u_n)\right)
\left(\frac{1}{2\theta}f(u_n)u_n-\frac12F(u_n)\right)\,dx\\
&\ge a\|u_n\|^{2}.
\end{aligned}
\]
Since \(I(u_n)\to\ell\) and \(I'(u_n)\to0\) in \(E'\), the sequence \((u_n)\)
is bounded in \(E\), and \(I'(u_n)u_n\to0\). Moreover,
\[
\limsup_{n\to\infty}\|u_n\|^{2}\le \frac{\ell}{a}<\frac{\ell_*}{a}
=\frac1{p_\mu}.
\]
Therefore, there exists \(\delta\in(0,1)\) such that
\[
\limsup_{n\to\infty}\|\nabla u_n\|_2^2
\le
\limsup_{n\to\infty}\|u_n\|^2
\le \frac{1-\delta}{p_\mu}.
\]
Set \(v_n=\sqrt{p_\mu}\,u_n\). From Lemma~\ref{Lem2.5} there exist numbers \(q_0>1\)
and \(C_L>0\) such that
\[
\sup_n\int_{\R^2}\left(e^{4\pi q_0p_\mu u_n^2}-1\right)\,dx\le C_L.
\]
Choose \(\alpha>4\pi\), \(\sigma>1\), and \(\tau>1\), all sufficiently close to
\(4\pi,1,1\), respectively, such that
\[
\alpha\sigma\tau<4\pi q_0.
\]
Then
\begin{equation}\label{eq2.6}
\sup_n\int_{\R^2}\left(e^{\alpha\sigma\tau p_\mu u_n^2}-1\right)\,dx<+\infty .
\end{equation}
Now for every \(\varepsilon>0\) and every \(q>2\), the critical growth assumptions
give constants \(C_{\varepsilon,\alpha,q}\), \(\widetilde C_{\varepsilon,\alpha,q}\), and
\(\widehat C_{\varepsilon,\alpha,q}\) such that
\begin{equation}\label{eq2.7}
|f(t)t|
\le \varepsilon |t|^{\frac{4-\mu}{2}}
+C_{\varepsilon,\alpha,q}|t|^{q}\bigl(e^{\alpha t^2}-1\bigr),
\end{equation}
\begin{equation}\label{eq2.8}
|F(t)|
\le \varepsilon |t|^{\frac{4-\mu}{2}}
+\widetilde C_{\varepsilon,\alpha,q}|t|^{q}\bigl(e^{\alpha t^2}-1\bigr),
\end{equation}
and
\begin{equation}\label{eq2.9}
|f(t)|
\le \varepsilon |t|^{\frac{2-\mu}{2}}
+\widehat C_{\varepsilon,\alpha,q}|t|^{q-1}\bigl(e^{\alpha t^2}-1\bigr).
\end{equation}
Let \(\sigma'\) denote the H\"older conjugate of \(\sigma\). Thus, using the following inequality
\[
(e^t-1)^{p_\mu\tau}\le C(e^{p_\mu\tau t}-1),\qquad t\ge0,
\]
 combined with H\"older's inequality, \eqref{eq2.6}, and the boundedness of \((u_n)\) in all finite \(L^p\)-spaces, we obtain the higher integrability estimate
\begin{equation}\label{eq2.10}
\sup_n\|F(u_n)\|_{L^{p_\mu\tau}(\R^2)}
+
\sup_n\|f(u_n)u_n\|_{L^{p_\mu\tau}(\R^2)}
<+\infty .
\end{equation}
Indeed, we have
\[
\begin{aligned}
\int_{\R^2}|u_n|^{q p_\mu\tau}
      (e^{\alpha u_n^2}-1)^{p_\mu\tau}\,dx
&\le
C\Bigl(\int_{\R^2}(e^{\alpha\sigma\tau p_\mu u_n^2}-1)\,dx\Bigr)^{1/\sigma}
\Bigl(\int_{\R^2}|u_n|^{q p_\mu\tau\sigma'}\,dx\Bigr)^{1/\sigma'} ,
\end{aligned}
\]
and the right-hand side is uniformly bounded. Since \((u_n)\) is bounded in \(E\), after passing to a subsequence,
\[
u_n\rightharpoonup u \quad\text{in }E,\qquad
u_n\to u \quad\text{in }L^p(\R^2)\text{ for every }2\le p<+\infty,
\qquad
u_n(x)\to u(x)\quad\text{a.e. in }\R^2.
\]
The estimate \eqref{eq2.10} implies that \(\{|F(u_n)|^{p_\mu}\}_n,\) and \(\{|f(u_n)u_n|^{p_\mu}\}_n\) are uniformly integrable on every measurable set of finite measure. Hence, by Vitali's theorem, we have 
\[
F(u_n)\to F(u),\qquad f(u_n)u_n\to f(u)u
\quad\text{in }L^{p_\mu}(B_R), \,\, \text{for every}\, R>0.
\]
We now pass from local to global convergence. Let \(m_1=q p_\mu\sigma'\) and \(m_2=q p_\mu\tau\sigma'.\) Choose \(q>2\) such that both exponents are at least \(2\). Since
\(u_n\to u\) strongly in \(L^{m_1}(\R^2)\), \(L^{m_2}(\R^2)\), and \(L^2(\R^2)\),
the families \(\{|u_n|^{m_1}\}\), \(\{|u_n|^{m_2}\}\), and \(\{|u_n|^2\}\) are
tight in \(L^1(\R^2)\). Thus, for every \(\varepsilon>0\), there exists
\(R_\varepsilon>0\) such that the corresponding tails of \(u_n\) and of \(u\)
outside \(B_{R_\varepsilon}\) are bounded by \(\varepsilon\), uniformly in \(n\). Now, using \eqref{eq2.8}, \eqref{eq2.6}, and H\"older's inequality,
we get
\[
\begin{aligned}
\int_{\R^2\setminus B_{R_\varepsilon}} |F(u_n)|^{p_\mu}\,dx
&\le
C\int_{\R^2\setminus B_{R_\varepsilon}} |u_n|^2\,dx  
+C
\Bigl(\int_{\R^2}(e^{\alpha\sigma p_\mu u_n^2}-1)\,dx\Bigr)^{1/\sigma}
\Bigl(\int_{\R^2\setminus B_{R_\varepsilon}} |u_n|^{m_1}\,dx\Bigr)^{1/\sigma'}\\
&\le C\varepsilon^{1/\sigma'}+C\varepsilon .
\end{aligned}
\]
Note that the same estimate holds for \(F(u)\). Also, the identical argument based on
\eqref{eq2.7} applies to \(f(u_n)u_n\) and \(f(u)u\). Combining the
tail estimate with the local Vitali convergence yields
\[
F(u_n)\to F(u),\quad \text{and}\quad f(u_n)u_n\to f(u)u
\quad\text{in }L^{p_\mu}(\R^2).
\]
We now define
\[
B(\varphi,\psi)=\int_{\R^2}\left(\frac1{|x|^\mu}*\varphi\right)\psi\,dx .
\]
By Lemma~\ref{Lem2.3}, \(B\) is continuous on
\(L^{p_\mu}(\R^2)\times L^{p_\mu}(\R^2)\). Thus, we have
\[
\int_{\R^2}\left(\frac1{|x|^\mu}*F(u_n)\right)f(u_n)u_n\,dx
\to
\int_{\R^2}\left(\frac1{|x|^\mu}*F(u)\right)f(u)u\,dx.
\]
Since \(I'(u_n)[u_n]\to0\), we obtain
\begin{equation}\label{eq2.11}
\|u_n\|^2\to
\int_{\R^2}\left(\frac1{|x|^\mu}*F(u)\right)f(u)u\,dx .
\end{equation}
Next, we prove that \(u\) is a critical point of \(I\). Again, for \(\varphi\in C_c^\infty(\R^2)\) and \(K=\operatorname{supp}\varphi\), using the Vitali argument and \eqref{eq2.9}, we get
\[
f(u_n)\varphi\to f(u)\varphi
\quad\text{in }L^{p_\mu}(\R^2).
\]
Together with \(F(u_n)\to F(u)\) in \(L^{p_\mu}(\R^2)\), this implies
\[
\int_{\R^2}\left(\frac1{|x|^\mu}*F(u_n)\right)f(u_n)\varphi\,dx
\to
\int_{\R^2}\left(\frac1{|x|^\mu}*F(u)\right)f(u)\varphi\,dx.
\]
Passing to the limit in \(I'(u_n)[\varphi]\to0\), we get
\(I'(u)[\varphi]=0\) for all \(\varphi\in C_c^\infty(\R^2)\). Since
\(I\in C^1(E,\R)\), the functional \(I'(u)\) is continuous on \(E\).  Therefore,
by Lemma~\ref{Lem2.2}, we have \(I'(u)=0\) in \(E'.\) Thus, we get
\[
\|u\|^2=
\int_{\R^2}\left(\frac1{|x|^\mu}*F(u)\right)f(u)u\,dx.
\]
Combining this identity with \eqref{eq2.11}, we have \(\|u_n\|^2\to\|u\|^2.\) Since \(u_n\rightharpoonup u\) in the Hilbert space \(E\), it follows that
\(u_n\to u\) strongly in \(E\). This proves the compactness assertion.
\end{proof}

\section{Positive solution with least energy}\label{Sec3}
In this section we assume $(V_1)$--$(V_2)$ and $(f_1)$--$(f_4)$. We define the least energy level on the Nehari manifold as 
\[
c:=\inf_{u\in\mathcal N} I(u).
\]

\begin{Lem}\label{Lem3.1}
Let $u\in\mathcal N$ and set $g_u(t)=I(tu)$ for $t>0$. Then
\[
g_u'(t)>0\quad\text{for }0<t<1,
\qquad
g_u'(t)<0\quad\text{for }t>1.
\]
In particular,
\[
I(tu)<I(u)\quad\text{for every }t>0,\ t\ne1.
\]
\end{Lem}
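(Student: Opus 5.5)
The plan is to reduce the sign of \(g_u'\) to the monotonicity of a single scalar function. By Lemma~\ref{Lem2.4}, \(g_u\) is \(C^1\) on \((0,\infty)\) and
\[
g_u'(t)=I'(tu)[u]=t\Bigl(\|u\|^2-h(t)\Bigr),\qquad
h(t):=\int_{\R^2}\int_{\R^2}\frac{F(tu(y))}{|x-y|^{\mu}}\,\frac{f(tu(x))u(x)}{t}\,dy\,dx .
\]
Since \(u\in\mathcal N\), we have \(h(1)=\|u\|^2\). It therefore suffices to show that \(h\) is strictly increasing on \((0,\infty)\). Then \(h(t)<\|u\|^2\) for \(t<1\) and \(h(t)>\|u\|^2\) for \(t>1\), which gives the two sign conditions. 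The inequality \(I(tu)<I(u)\) for \(t\neq1\) then follows by integrating \(g_u'\) between \(t\) and \(1\).

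\textbf{Sign of \(f\) and \(F\), and positivity of \(u^+\).} First I will record the sign information on \(f\) and \(F\).
\begin{itemize}
\item By \((f_1)\), \(f=F=0\) on \((-\infty,0]\), so only the set \(\{u>0\}\) contributes to \(h\).
\item By the Remark, \(f(t)/t\to0\) as \(t\to0^+\). By \((f_2)\), \(f(t)/t\) is strictly increasing on \((0,\infty)\). Hence \(f(t)/t>0\), so \(f>0\) and \(F>0\) on \((0,\infty)\).
\item The set \(\{u>0\}\) has positive measure. Otherwise \(F(u)\equiv0\), and the Nehari identity would give \(\|u\|^2=0\), contradicting \(u\neq0\).
\end{itemize}

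\textbf{Monotonicity of the two factors.} Fix \(x,y\) with \(u(x)>0\) and \(u(y)>0\).
\begin{itemize}
\item The factor \(f(tu(x))u(x)/t=\frac{f(tu(x))}{tu(x)}\,u(x)^2\) is positive and strictly increasing in \(t\), by \((f_2)\).
\item The factor \(F(tu(y))=\int_0^{tu(y)}f\) is positive and strictly increasing in \(t\), since \(f>0\) on \((0,\infty)\).
\end{itemize}
So the integrand of \(h\) is a product of two positive strictly increasing functions on \(\{u>0\}\times\{u>0\}\). This set has positive product measure, and the integrand vanishes off it. Consequently, for \(0<t_1<t_2\), the integrand at \(t_2\) strictly exceeds the integrand at \(t_1\) on a set of positive measure, and is at least as large elsewhere. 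Both integrals are finite by Lemma~\ref{Lem2.4} and the Hardy--Littlewood--Sobolev bound of Lemma~\ref{Lem2.3}. Therefore \(h(t_1)<h(t_2)\).

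\textbf{Main obstacle.} The only delicate point is strictness. It requires \(F>0\) on \((0,\infty)\), not merely \(F\ge0\), together with the positive measure of \(\{u>0\}\). Both are handled in the first step, using \((f_2)\) combined with the behaviour of \(f\) at the origin, and using the Nehari identity. Everything else is direct.
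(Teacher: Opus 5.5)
Your proof is correct, but it gets strictness by a different mechanism than the paper. In the paper's notation your $h(t)$ is $\mathcal J(t)/t$.

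\textbf{The paper's route.} The paper never shows that this quotient is strictly increasing. It uses $(f_2)$ only through the pointwise scaling bounds $f(ta)\le t f(a)$ and $F(ta)\le t^2F(a)$ for $0<t\le1$, reversed for $t\ge1$. Combined with the nonnegativity from $(f_3)$, these give $\mathcal J(t)\le t^3\mathcal J(1)=t^3\|u\|^2$ for $t<1$ and $\mathcal J(t)\ge t^3\|u\|^2$ for $t>1$. Hence $g_u'(t)\ge t(1-t^2)\|u\|^2>0$ for $t<1$ and $g_u'(t)\le t(1-t^2)\|u\|^2<0$ for $t>1$. Strictness comes from the gap between $t$ and $t^3$, i.e. the superquadratic scaling of $F$, together with $u\neq0$. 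So the paper needs none of the following: $f,F>0$ on $(0,\infty)$, the positive measure of $\{u>0\}\times\{u>0\}$, or strictness in $(f_2)$. These are exactly the points you singled out as the main obstacle.

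\textbf{Your route.} You use the weaker fact that $F(tu)$ is merely increasing in $t$. You compensate with strict monotonicity of $f(\tau)/\tau$ and a positive-measure argument. In exchange, you get strict monotonicity of $\mathcal J(t)/t$ on all of $(0,\infty)$ for every $u$ with $u^+\not\equiv0$, not only for $u\in\mathcal N$. This yields the uniqueness part of Lemma~\ref{Lem3.2} immediately, without the paper's separate contradiction argument.
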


\begin{proof}
Fix $u\in\mathcal N$ and define
\[
g(t)=I(tu),\qquad t>0.
\]
Therefore, we have
\[
g(t)=\frac{t^{2}}{2}\|u\|^{2}
-\frac{1}{2}\int_{\R^{2}}\left(\frac{1}{|x|^{\mu}}*F(tu)\right)F(tu)\,dx,
\]
and
\[
g'(t)=t\|u\|^{2}
-\int_{\R^{2}}\left(\frac{1}{|x|^{\mu}}*F(tu)\right) f(tu)\,u\,dx.
\]
Using the truncation condition in $(f_1)$, we have $f(\tau)=0$ and $F(\tau)=0$ for all $\tau\le0$. Thus, we get
\[
F(tu)=F(tu^{+}),
\qquad
f(tu)\,u=f(tu^{+})\,u^{+}
\quad\text{a.e. in }\R^{2},~~\text{for every}~~t>0.
\]
This implies that
\begin{equation}\label{eq3.1}
g'(t)=t\|u\|^{2}
-\int_{\R^{2}}\left(\frac{1}{|x|^{\mu}}*F(tu^{+})\right) f(tu^{+})\,u^{+}\,dx.
\end{equation}
Now, since $u\in\mathcal N$, we have $I'(u)[u]=0$, that is,
\begin{equation}\label{eq3.2}
\|u\|^{2}
=\int_{\R^{2}}\left(\frac{1}{|x|^{\mu}}*F(u)\right) f(u)\,u\,dx
=\int_{\R^{2}}\left(\frac{1}{|x|^{\mu}}*F(u^{+})\right) f(u^{+})\,u^{+}\,dx.
\end{equation}
In particular, $u^{+}\ne0$.

We now define
\[
\mathcal J(t)
=\int_{\R^{2}}\left(\frac{1}{|x|^{\mu}}*F(tu^{+})\right) f(tu^{+})\,u^{+}\,dx
=\iint_{\R^{2}\times\R^{2}}
\frac{F\bigl(tu^{+}(y)\bigr)\,f\bigl(tu^{+}(x)\bigr)\,u^{+}(x)}{|x-y|^{\mu}}\,dx\,dy.
\]
Therefore, from \eqref{eq3.1} and \eqref{eq3.2}, we deduce that
\[
g'(t)=t\|u\|^{2}-\mathcal J(t),\qquad \text{and}\qquad
\mathcal J(1)=\|u\|^{2}.
\]
Note that by $(f_2)$, the map $t\mapsto \frac{f(t)}{t}$ is strictly increasing on $(0,+\infty)$.
Therefore, for every $a>0$,
\begin{equation}\label{eq3.3}
f(ta)\le t\,f(a)\quad\text{for }0<t\le1,
\qquad
f(ta)\ge t\,f(a)\quad\text{for }t\ge1.
\end{equation}
Moreover, using
\[
F(ta)=\int_{0}^{ta}f(\tau)\,d\tau
=t\int_{0}^{a}f(t\sigma)\,d\sigma,
\]
we obtain
\begin{equation}\label{eq3.4}
F(ta)\le t^{2}F(a)\quad\text{for }0<t\le1,
\qquad
F(ta)\ge t^{2}F(a)\quad\text{for }t\ge1.
\end{equation}
We now divide the proof into two cases. Let $0<t<1$. Then for a.e. $(x,y)\in\R^{2}\times\R^{2}$,
\[
F\bigl(tu^{+}(y)\bigr)\,f\bigl(tu^{+}(x)\bigr)\,u^{+}(x)
\le t^{3}F\bigl(u^{+}(y)\bigr)\,f\bigl(u^{+}(x)\bigr)\,u^{+}(x).
\]
Since the kernel $|x-y|^{-\mu}$ is positive, it follows that
\[
\mathcal J(t)\le t^{3}\mathcal J(1)=t^{3}\|u\|^{2}.
\]
Therefore, we get
\[
g'(t)\ge t\|u\|^{2}-t^{3}\|u\|^{2}
=t(1-t^{2})\|u\|^{2}>0
\quad\text{for }0<t<1.
\]
On the other hand, if $t>1$, then the inequalities in \eqref{eq3.3} and \eqref{eq3.4} are reversed, and thus we have
\[
\mathcal J(t)\ge t^{3}\mathcal J(1)=t^{3}\|u\|^{2}.
\]
Consequently,
\[
g'(t)\le t\|u\|^{2}-t^{3}\|u\|^{2}
=t(1-t^{2})\|u\|^{2}<0,
\quad\text{for }t>1.
\]
Therefore $g$ is strictly increasing on $(0,1)$ and strictly decreasing on $(1,+\infty)$. It follows that \(g\) has a unique global maximum at \(t=1\), which implies that
\[
I(tu)=g(t)<g(1)=I(u)
\quad\text{for every }t>0,\ t\ne1.
\]
This completes the proof.
\end{proof}

\begin{Lem}\label{Lem3.2}
For each $u\in E$ with $u\ge 0$ and $u\not\equiv 0$, there exists a unique $t_u>0$ such that $t_u u \in \mathcal{N}$.
Moreover,
\[
I(t_u u)=\max_{t>0} I(tu).
\]
\end{Lem}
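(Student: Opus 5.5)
We study the fibering map $g(t)=I(tu)$ for $t>0$, where $u\ge0$, $u\not\equiv0$. By Lemma~\ref{Lem2.4} the map $g$ is $C^1$, and
\[
g'(t)=t\Bigl(\|u\|^2-\Psi(t)\Bigr),\qquad
\Psi(t):=\frac1t\int_{\R^2}\Bigl(\frac1{|x|^\mu}*F(tu)\Bigr)f(tu)u\,dx .
\]
We have $tu\in\mathcal N$ if and only if $g'(t)=0$, that is, $\Psi(t)=\|u\|^2$. So the proof reduces to three facts about $\Psi$ on $(0,\infty)$: it is strictly increasing, $\Psi(t)\to0$ as $t\to0^+$, and $\Psi(t)\to+\infty$ as $t\to+\infty$. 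Continuity of $\Psi$ follows from the $C^1$ property of $I$.

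\textbf{Monotonicity.} Write
\[
\Psi(t)=\iint \frac{F(tu(y))}{t^2}\,\frac{f(tu(x))}{t\,u(x)}\,u(x)^2\,|x-y|^{-\mu}\,dx\,dy ,
\]
on the set where $u(x)>0$. Both factors are monotone in $t$. By $(f_2)$, $f(\tau)/\tau$ is strictly increasing. The map $t\mapsto F(ta)/t^2=\int_0^a \frac{f(t\sigma)}{t\sigma}\sigma\,d\sigma$ is also strictly increasing for $a>0$. Hence $\Psi(t_1)<\Psi(t_2)$ whenever $t_1<t_2$, because the set $\{u>0\}$ has positive measure; this is the same reasoning as in \eqref{eq3.3}--\eqref{eq3.4}. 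This gives uniqueness of $t_u$.

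\textbf{Small $t$.} We use \eqref{eq2.1}--\eqref{eq2.2} together with Lemma~\ref{Lem2.3}. For $t\le1$, the exponential terms are uniformly controlled by Proposition~\ref{Pro2.1}, since $\alpha t^2u^2\le\alpha u^2$. This gives
\[
\|F(tu)\|_{p_\mu}\le \varepsilon C t^{(4-\mu)/2}+Ct^q
\quad\text{and}\quad
\|f(tu)u\|_{p_\mu}\le \varepsilon Ct^{(2-\mu)/2}+Ct^{q-1}.
\]
Hence $\Psi(t)\le C(\varepsilon t^{2-\mu}+t^{q-2})^{\cdot}$, up to lower-order mixed terms. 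The leading exponent is $t^{(4-\mu)/2+(2-\mu)/2-1}=t^{2-\mu}\to0$ since $\mu<2$, and the other terms carry positive powers because $q>2$. Therefore $\Psi(t)\to0$, so $g'>0$ near $0$.

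\textbf{Large $t$.} This is the step needing the most care. Choose $\delta>0$ such that the set $A=\{u\ge\delta\}$ has positive, finite measure; this is possible since $u\in L^2$ and $u\not\equiv0$. Intersecting with a large ball, we may also assume $A\subset B_R$. Restrict the double integral to $A\times A$ and use $|x-y|^{-\mu}\ge(2R)^{-\mu}$ there. Since $F$ and $f$ are nondecreasing on $[0,\infty)$ (from $(f_2)$, $f\ge0$ and $f(t)/t$ increasing), for $t\delta\ge T_0$ we get
\[
\Psi(t)\ge c\,|A|^2\,\frac{F(t\delta)f(t\delta)\delta}{t}
\ge c'\,\frac{F(t\delta)^2}{t^2},
\]
using $f(\tau)\tau\ge\theta F(\tau)$ from $(f_3)$. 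By $(f_4)$, $F(t\delta)\ge C_0e^{4\pi t^2\delta^2}$, so $\Psi(t)\to+\infty$.

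By the intermediate value theorem there is a unique $t_u$ with $\Psi(t_u)=\|u\|^2$. Moreover $g'>0$ on $(0,t_u)$ and $g'<0$ on $(t_u,\infty)$, so $I(t_uu)=\max_{t>0}I(tu)$. Alternatively, once $t_uu\in\mathcal N$, the maximum property follows directly from Lemma~\ref{Lem3.1}.
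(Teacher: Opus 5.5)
Your proof is correct, but it is organized differently from the paper's.

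The paper works with the fibering map $g(t)=I(tu)$ itself. It shows $g>0$ for small $t$ through a rescaled Trudinger--Moser bound. It shows $g(t)\to-\infty$ through the differential inequality $A'(t)\ge\frac{2\theta}{t}A(t)$ obtained from $(f_3)$. A global maximum point then gives $t_uu\in\mathcal N$, and uniqueness is proved by contradiction via Lemma~\ref{Lem3.1}.

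You instead study $g'(t)/t=\|u\|^2-\Psi(t)$ and show that the quotient $\Psi(t)=\mathcal J(t)/t$ increases strictly from $0$ to $+\infty$, where $\mathcal J(t)=\int_{\R^2}(|x|^{-\mu}*F(tu))f(tu)u\,dx$. This gives existence, uniqueness and the full sign pattern of $g'$ (hence the maximum property) in one step, without Lemma~\ref{Lem3.1} or a contradiction argument. Your small-$t$ estimate is also simpler: $u$ is fixed and $e^{\alpha t^2u^2}-1\le e^{\alpha u^2}-1$ for $t\le1$, so no rescaling is needed. The paper's route, in turn, reuses Lemma~\ref{Lem3.1}, which it needs later anyway, and uses only $(f_3)$ at infinity.

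Two corrections are worth making.

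First, your integral representation of $\Psi$ is off by a factor $t^2$:
\[
\iint_{\R^2\times\R^2}\frac{F(tu(y))}{t^2}\,\frac{f(tu(x))}{tu(x)}\,\frac{u(x)^2}{|x-y|^{\mu}}\,dx\,dy=\frac{\mathcal J(t)}{t^3}=\frac{\Psi(t)}{t^2}.
\]
This is harmless. $\Psi(t)=t^2\cdot\mathcal J(t)/t^3$ is a product of two positive, strictly increasing factors, so the strict monotonicity you claim still holds.

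Second, the blow-up $\Psi(t)\to+\infty$ does not need the critical lower bound $(f_4)$. By $(f_3)$ one has $F(\tau)\ge F(1)\tau^{\theta}$ for $\tau\ge1$, with $F(1)>0$. Your estimate $\Psi(t)\ge c\,F(t\delta)^2/t^2$ then already gives $\Psi(t)\ge c'\,t^{2\theta-2}\to+\infty$, since $\theta>1$. Using this keeps the lemma independent of $(f_4)$, as in the paper.
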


\begin{proof}
Fix $u\in E$ with $u\ge0$ and $u\not\equiv0$, and define \(g(t)=I(tu),\, t>0.\)
Then we have
\[
g(t)=\frac{t^{2}}{2}\|u\|^{2}
-\frac{1}{2}\int_{\R^{2}}\left(\frac{1}{|x|^{\mu}}*F(tu)\right)F(tu)\,dx,
\]
and
\[
g'(t)=t\|u\|^{2}-\int_{\R^{2}}\left(\frac{1}{|x|^{\mu}}*F(tu)\right)f(tu)\,u\,dx.
\]
Therefore, \(g'(t)=0\) if and only if \(I'(tu)[tu]=0,\) that is, \(g'(t)=0\) if and only if \(tu\in\mathcal N.\)

We first claim that $g(t)>0$ for all sufficiently small $t>0$. Indeed, using \eqref{eq2.8}, for every $\varepsilon>0$, for every $\alpha>4\pi$, and for every $q>2$, there exists $C>0$ such that
\[
|F(\tau)|
\le \varepsilon\,|\tau|^{\frac{4-\mu}{2}}
+C\,|\tau|^{q}\bigl(e^{\alpha \tau^{2}}-1\bigr)
\quad\text{for all }\tau\in\R.
\]
By Lemma~\ref{Lem2.3} with $N=2$ and $t=r=p_\mu$, we have
\[
\int_{\R^{2}}\left(\frac{1}{|x|^{\mu}}*F(tu)\right)F(tu)\,dx
\le C_\mu \|F(tu)\|_{p_\mu}^{2}.
\]
Using the above growth estimate, combined with the inequality \((a+b)^{p_\mu}\le 2^{p_\mu-1}(a^{p_\mu}+b^{p_\mu}),\)  and \(p_\mu\frac{4-\mu}{2}=2,\) we obtain
\[
\|F(tu)\|_{p_\mu}^{2}
\le C\,\varepsilon^{2}\,t^{4-\mu}
+C\,t^{2q}\Bigl\||u|^{q}\bigl(e^{\alpha t^{2}u^{2}}-1\bigr)\Bigr\|_{p_\mu}^{2}.
\]
Now choose $\sigma>1$ and let $\sigma'$ be its H\"older conjugate. Then we have
\[
\int_{\R^{2}}|u|^{q p_\mu}\bigl(e^{\alpha p_\mu t^{2}u^{2}}-1\bigr)\,dx
\le
\Bigl(\int_{\R^{2}}\bigl(e^{\alpha p_\mu \sigma t^{2}u^{2}}-1\bigr)\,dx\Bigr)^{\frac{1}{\sigma}}
\Bigl(\int_{\R^{2}}|u|^{q p_\mu\sigma'}\,dx\Bigr)^{\frac{1}{\sigma'}}.
\]
Since $u\in E\subset H^{1}(\R^{2})$, we may choose $t_0\in(0,1)$ such that
\[
\frac{\alpha p_\mu \sigma t_0^{2}}{4\pi}\,\|\nabla u\|_{2}^{2}\le 1.
\]
For $t\in(0,t_0]$, we define
\[
w_t=\left(\frac{\alpha p_\mu \sigma}{4\pi}\right)^{\frac12}tu.
\]
Thus, we have
\[
\|\nabla w_t\|_2\le1,
\qquad
\|w_t\|_2\le \left(\frac{\alpha p_\mu \sigma}{4\pi}\right)^{\frac12}t_0\|u\|_2.
\]
Therefore, by Proposition~\ref{Pro2.1}, there exists $C>0$ such that
\[
\sup_{t\in(0,t_0]}\int_{\R^{2}}\bigl(e^{\alpha p_\mu \sigma t^{2}u^{2}}-1\bigr)\,dx\le C.
\]
Since $u\in L^r(\R^2)$ for every $r\ge2$, it follows that
\[
\int_{\R^{2}}\left(\frac{1}{|x|^{\mu}}*F(tu)\right)F(tu)\,dx
\le C\,t^{4-\mu}+C\,t^{2q}~~\text{for all}~t\in(0,t_0].
\]
Therefore,
\[
g(t)\ge \frac{t^{2}}{2}\|u\|^{2}-C\,t^{4-\mu}-C\,t^{2q}
\quad\text{for all }t\in(0,t_0].
\]
Since $4-\mu>2$ and $2q>2$, there exists $t_1\in(0,t_0]$ such that
\[
g(t)>0
\quad\text{for all }t\in(0,t_1).
\]
In particular, we have
\[
\lim_{t\downarrow0}g(t)=0.
\]
Next, we prove that \(g(t)\to-\infty
\quad\text{as }t\to+\infty.\) For this purpose, we set
\[
A(t)=\int_{\R^{2}}\left(\frac{1}{|x|^{\mu}}*F(tu)\right)F(tu)\,dx.
\]
Since $u\ge0$ and $u\not\equiv0$, the set $\{x\in\R^2:u(x)>0\}$ has positive measure.
Moreover, by $(f_3)$, we get \(f(t)\ge0\) for all \(t>0\). Since \(t\mapsto f(t)/t\) is strictly increasing on \((0,+\infty)\), it follows that
\[
f(t)>0\quad\text{for all }t>0.
\]
This implies that \(F(t)>0\) for all \(t>0\) and therefore, we have \(A(t)>0\) for all \(t>0\). Using the symmetry of the convolution in $A(t)$, we compute
\[
A'(t)=2\int_{\R^{2}}\left(\frac{1}{|x|^{\mu}}*F(tu)\right)f(tu)\,u\,dx.
\]
By $(f_3)$ and $u\ge0$, we get
\[
f(tu)\,u=\frac{f(tu)\,tu}{t}\ge \frac{\theta}{t}F(tu).
\]
Therefore, we have
\[
A'(t)\ge \frac{2\theta}{t}A(t)
\quad\text{for all }t>0.
\]
Integrating this differential inequality on $[1,t]$, we obtain
\[
A(t)\ge A(1)\,t^{2\theta}~~\text{for}~~t\geq1.
\]
Consequently, using the fact $\theta>1$, we get
\[
g(t)=\frac{t^{2}}{2}\|u\|^{2}-\frac{1}{2}A(t)
\le \frac{t^{2}}{2}\|u\|^{2}-\frac{A(1)}{2}\,t^{2\theta}
\to -\infty
\quad\text{as }t\to+\infty.
\]
Note that $g$ is continuous on $(0,+\infty)$. We extend it continuously to $[0,+\infty)$ by setting $g(0)=0$. Since $g(t)>0$ for $t\in(0,t_1)$ and $g(t)\to -\infty$ as $t\to+\infty$, the function $g$ attains its global maximum at some $t_u>0$. Hence $g'(t_u)=0$, and therefore
\[
t_u u\in\mathcal N,
\qquad
I(t_u u)=g(t_u)=\max_{t>0}I(tu).
\]
It remains to prove the uniqueness of $t_u$. Assume that $t_1u,t_2u\in\mathcal N$ with $t_1\ne t_2$.
Without loss of generality, let $0<t_1<t_2$, and set
\[
v=t_1u\in\mathcal N.
\]
Then we have
\[
t_2u=\frac{t_2}{t_1}v,
\qquad
\frac{t_2}{t_1}>1.
\]
By Lemma~\ref{Lem3.1}, we get
\[
I(t_2u)=I\left(\frac{t_2}{t_1}v\right)<I(v)=I(t_1u).
\]
On the other hand, applying Lemma~\ref{Lem3.1} to $t_2u\in\mathcal N$, we obtain
\[
I(t_1u)=I\left(\frac{t_1}{t_2}(t_2u)\right)<I(t_2u),
\]
which is impossible.
Therefore, $t_u$ is unique.
\end{proof}

\begin{Lem}\label{Lem3.3}
There exists a constant $C>0$ such that $\|u\|^{2}\ge C$ for every $u\in\mathcal{N}.$
\end{Lem}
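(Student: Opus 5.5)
The plan is to argue by contradiction. Suppose there is a sequence $(u_n)\subset\mathcal N$ with $\|u_n\|\to0$. Since $u_n\in\mathcal N$, we have
\[
\|u_n\|^2=\int_{\R^2}\Bigl(\frac1{|x|^\mu}*F(u_n)\Bigr)f(u_n)u_n\,dx ,
\]
and the goal is to show that the right-hand side is $o(\|u_n\|^2)$. That contradicts $u_n\neq0$.

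First, by Lemma~\ref{Lem2.3} with $t=r=p_\mu$, the right-hand side is bounded by $C_\mu\|F(u_n)\|_{p_\mu}\|f(u_n)u_n\|_{p_\mu}$. Next, fix $\alpha>4\pi$, $q>2$ and $\varepsilon=1$ in \eqref{eq2.7}--\eqref{eq2.8}. Because $p_\mu\frac{4-\mu}{2}=2$, this gives
\[
\|F(u_n)\|_{p_\mu}+\|f(u_n)u_n\|_{p_\mu}\le C\|u_n\|_2^{\frac{4-\mu}{2}}+C\Bigl\||u_n|^q(e^{\alpha u_n^2}-1)\Bigr\|_{p_\mu}.
\]
The first term is at most $C\|u_n\|^{(4-\mu)/2}$, using $E\hookrightarrow L^2(\R^2)$.

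The exponential term is handled exactly as in Lemma~\ref{Lem3.2}. Take $\sigma>1$ and apply H\"older's inequality:
\[
\int_{\R^2}|u_n|^{qp_\mu}(e^{\alpha u_n^2}-1)^{p_\mu}dx\le C\Bigl(\int_{\R^2}(e^{\alpha p_\mu\sigma u_n^2}-1)dx\Bigr)^{1/\sigma}\|u_n\|_{qp_\mu\sigma'}^{qp_\mu}.
\]
Here the elementary bound $(e^t-1)^{p_\mu}\le C(e^{p_\mu t}-1)$ is used. Since $\|u_n\|\to0$, we have $\|\nabla u_n\|_2^2\le\|u_n\|^2\le 4\pi/(\alpha p_\mu\sigma)$ for large $n$, and $\|u_n\|_2$ stays bounded. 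Applying Proposition~\ref{Pro2.1} to $w_n=(\alpha p_\mu\sigma/4\pi)^{1/2}u_n$ then bounds the exponential integral uniformly in $n$. Finally, $E\hookrightarrow L^{qp_\mu\sigma'}(\R^2)$ gives $\||u_n|^q(e^{\alpha u_n^2}-1)\|_{p_\mu}\le C\|u_n\|^q$.

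Combining these estimates yields
\[
\|u_n\|^2\le C\bigl(\|u_n\|^{\frac{4-\mu}{2}}+\|u_n\|^q\bigr)^2\le C\bigl(\|u_n\|^{4-\mu}+\|u_n\|^{2q}\bigr).
\]
Dividing by $\|u_n\|^2>0$ gives $1\le C(\|u_n\|^{2-\mu}+\|u_n\|^{2q-2})\to0$, since $\mu<2$ and $q>2$. This is the contradiction.

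The only delicate point is that the Trudinger--Moser bound must be applied uniformly along the sequence. This requires the smallness of $\|\nabla u_n\|_2$, which holds precisely because we argue by contradiction with $\|u_n\|\to0$. An equivalent direct route is to fix $\rho>0$ small and show that $\|u\|\le\rho$ forces $\|u\|^2\le C(\|u\|^{4-\mu}+\|u\|^{2q})$.
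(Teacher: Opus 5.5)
Your proposal follows the paper's proof essentially step by step. The ingredients match: contradiction with $\|u_n\|\to0$, the Hardy--Littlewood--Sobolev bound with exponents $p_\mu$, the growth estimates \eqref{eq2.7}--\eqref{eq2.8}, H\"older plus Trudinger--Moser for the exponential part, and the final comparison $1\le C(\|u_n\|^{2-\mu}+\|u_n\|^{2q-2})\to0$.

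The one step to repair is the Trudinger--Moser application. With $w_n=(\alpha p_\mu\sigma/4\pi)^{1/2}u_n$ you have $\alpha p_\mu\sigma u_n^2=4\pi w_n^2$. You are therefore using Proposition~\ref{Pro2.1} at the endpoint exponent $4\pi$, which it does not cover. In fact, under only $\|\nabla w\|_2\le1$ and $\|w\|_2\le M$, the supremum at $4\pi$ is infinite; rescaling Moser functions to fixed $L^2$ norm shows this. The write-up of Lemma~\ref{Lem3.2} that you followed has the same slip.

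The fix is easy because $\|\nabla u_n\|_2\to0$ gives ample slack. Require $\alpha p_\mu\sigma\|\nabla u_n\|_2^2\le2\pi$ for large $n$, and apply the proposition with exponent $2\pi$ to $\sqrt2\,w_n$. This is exactly what the paper does here, by setting $w_n=u_n/\|u_n\|_{H^1(\R^2)}$ and using $\alpha r\kappa\|u_n\|_{H^1(\R^2)}^2\le2\pi$.
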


\begin{proof}
Assume by contradiction that there exists $(u_n)\subset\mathcal N$ such that $\|u_n\|\to 0.$ By $(V_1)$, we have
\[
\|u_n\|_{H^{1}(\R^{2})}^{2}
\le C\|u_n\|^{2}\to0.
\]
Therefore, 
\[
u_n\to0 \quad\text{in }L^{p}(\R^{2})\ \text{for every }p\in[2,+\infty),
\qquad
\|u_n\|_{p}\le C_p\|u_n\|.
\]
Since $u_n\in\mathcal N$, we have
\[
\|u_n\|^{2}
=\int_{\R^{2}}\left(\frac{1}{|x|^{\mu}}*F(u_n)\right) f(u_n)\,u_n\,dx.
\]
Thus, by Lemma~\ref{Lem2.3}, we obtain
\[
\|u_n\|^{2}
\le C_\mu\,\|F(u_n)\|_{r}\,\|f(u_n)u_n\|_{r},
\]
where $r=p_\mu=\frac{4}{4-\mu}.$ Now, fix $\alpha>4\pi$, $q>2$, and $\kappa>1$.
By \eqref{eq2.7} and \eqref{eq2.8}, for every $\varepsilon>0$ there exists $C>0$ such that
\[
|F(t)|+|f(t)t|
\le \varepsilon |t|^{\frac{4-\mu}{2}}
+C\,|t|^{q}\bigl(e^{\alpha t^{2}}-1\bigr)
\quad\text{for all }t\in\R.
\]
Since $r\frac{4-\mu}{2}=2,$ we get
\[
\bigl\||u_n|^{\frac{4-\mu}{2}}\bigr\|_{r}
=\|u_n\|_{2}^{\frac{4-\mu}{2}}
\le C\|u_n\|^{\frac{4-\mu}{2}}.
\]
Let $\kappa'$ be the H\"older conjugate to $\kappa$.
Since $\|u_n\|_{H^{1}(\R^{2})}\to0$, for $n$ large, we have $\alpha r \kappa\,\|u_n\|_{H^{1}(\R^{2})}^{2}\le 2\pi.$ Now, set
\[
w_n=\frac{u_n}{\|u_n\|_{H^{1}(\R^{2})}}.
\]
Therefore,
\[
\|\nabla w_n\|_2\le1,
\qquad
\|w_n\|_2\le1.
\]
By Proposition~\ref{Pro2.1}, we have
\[
\int_{\R^{2}}\bigl(e^{2\pi w_n^{2}}-1\bigr)\,dx\le C.
\]
Therefore, we obtain
\[
\int_{\R^{2}}\bigl(e^{\alpha r \kappa\,u_n^{2}}-1\bigr)\,dx\le C.
\]
Moreover, for every $\rho>1$ there exists $C_\rho>0$ such that
\[
(e^t-1)^\rho\le C_\rho(e^{\rho t}-1),\qquad t\ge0.
\]
Using this estimate with $\rho=r$ and then H\"older's inequality, we get
\[
\begin{aligned}
\bigl\||u_n|^{q}(e^{\alpha u_n^{2}}-1)\bigr\|_{r}^{r}
&=\int_{\R^2}|u_n|^{qr}\bigl(e^{\alpha u_n^2}-1\bigr)^r\,dx \\
&\le C\int_{\R^2}|u_n|^{qr}\bigl(e^{\alpha r u_n^2}-1\bigr)\,dx \\
&\le C
\Bigl(\int_{\R^{2}}\bigl(e^{\alpha r \kappa\,u_n^{2}}-1\bigr)\,dx\Bigr)^{\frac{1}{\kappa}}
\Bigl(\int_{\R^{2}}|u_n|^{qr\kappa'}\,dx\Bigr)^{\frac{1}{\kappa'}}.
\end{aligned}
\]
Thus, we have
\[
\bigl\||u_n|^{q}(e^{\alpha u_n^{2}}-1)\bigr\|_{r}
\le C\|u_n\|_{qr\kappa'}^{q}
\le C\|u_n\|^{q}.
\]
It follows that
\[
\|F(u_n)\|_{r}+\|f(u_n)u_n\|_{r}
\le C\Bigl(\|u_n\|^{\frac{4-\mu}{2}}+\|u_n\|^{q}\Bigr).
\]
Therefore, 
\[
\|u_n\|^{2}
\le C\Bigl(\|u_n\|^{\frac{4-\mu}{2}}+\|u_n\|^{q}\Bigr)^{2}
\le C\Bigl(\|u_n\|^{4-\mu}+\|u_n\|^{2q}\Bigr).
\]
Dividing by $\|u_n\|^{2}$, we get
\[
1\le C\Bigl(\|u_n\|^{2-\mu}+\|u_n\|^{2q-2}\Bigr)\to0,
\]
which is impossible. Thus $\mathcal N$ is bounded away from zero.
\end{proof}

\begin{Lem}\label{Lem3.4}
If $(u_n)\subset\mathcal{N}$ is a minimizing sequence for the level $c$, then $(u_n)$ is bounded in $E$.
\end{Lem}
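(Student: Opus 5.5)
The plan is to use the Ambrosetti--Rabinowitz condition \((f_3)\) in the form \(I(u)-\frac{1}{2\theta}I'(u)[u]\), exactly as in the first step of Lemma~\ref{Lem2.7}. There is one complication: a general element of \(\mathcal N\) need not be nonnegative. The truncation in \((f_1)\) removes this problem, because \(F(u)=F(u^{+})\) and \(f(u)u=f(u^{+})u^{+}\) a.e., as noted in the proof of Lemma~\ref{Lem3.1}.

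Let \((u_n)\subset\mathcal N\) with \(I(u_n)\to c\). First, \(c<+\infty\), since \(\mathcal N\ne\emptyset\) by Lemma~\ref{Lem3.2} applied to any nonnegative nontrivial \(u\in E\). Since \(I'(u_n)[u_n]=0\), we can write
\[
I(u_n)=I(u_n)-\frac{1}{2\theta}I'(u_n)[u_n]
=a_\theta\|u_n\|^{2}
+\int_{\R^{2}}\left(\frac{1}{|x|^{\mu}}*F(u_n^{+})\right)\left(\frac{1}{2\theta}f(u_n^{+})u_n^{+}-\frac12F(u_n^{+})\right)dx.
\]

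The convolution factor is nonnegative. Indeed, \(F\ge0\) on \([0,+\infty)\) by \((f_3)\), and the kernel is positive. The second factor is also nonnegative pointwise, by \((f_3)\) applied at \(t=u_n^{+}(x)\ge0\). All integrals are finite by Lemma~\ref{Lem2.4}. Hence \(a_\theta\|u_n\|^2\le I(u_n)\le c+1\) for \(n\) large, with \(a_\theta=\frac12-\frac1{2\theta}>0\) because \(\theta>1\). This gives \(\sup_n\|u_n\|^2<+\infty\).

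As a byproduct, the same identity shows \(c\ge a_\theta\inf_{\mathcal N}\|u\|^2>0\) by Lemma~\ref{Lem3.3}, so \(c\) is finite and positive.

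The only delicate point is the sign issue, i.e. justifying that the Choquard remainder term is nonnegative for sign-changing \(u_n\). This is handled by reducing to \(u_n^{+}\) via the truncation \(f=0\) on \((-\infty,0]\). Everything else is the routine AR computation.
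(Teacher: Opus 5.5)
Your proof is correct and follows the paper's argument: the identity $I(u_n)=I(u_n)-\frac{1}{2\theta}I'(u_n)[u_n]\ge a_\theta\|u_n\|^2$, with the sign issue handled through the truncation in $(f_1)$. The paper phrases this by noting that $0\le\theta F(t)\le f(t)t$ holds for all $t\in\R$, which is equivalent to your passage to $u_n^{+}$; your extra remark that $c$ is finite because $\mathcal N\neq\emptyset$ by Lemma~\ref{Lem3.2} fills in a point the paper takes for granted.
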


\begin{proof}
Let $(u_n)\subset\mathcal N$ be such that $I(u_n)\to c.$ By $(f_3)$ for \(t\ge0\) and by the truncation condition in $(f_1)$ for \(t\le0\), we have
\[
0\le \theta F(t)\le f(t)t
\quad\text{for all }t\in\R.
\]
Therefore
\[
\frac{1}{2\theta}f(t)t-\frac12F(t)\ge 0
\quad\text{for all }t\in\R.
\]
Since $u_n\in\mathcal N$, we have $I'(u_n)[u_n]=0$. Thus
\[
\begin{aligned}
I(u_n)
&=I(u_n)-\frac{1}{2\theta}I'(u_n)[u_n]\\
&=\left(\frac12-\frac{1}{2\theta}\right)\|u_n\|^{2}
+\int_{\R^{2}}\left(\frac{1}{|x|^{\mu}}*F(u_n)\right)
\left(\frac{1}{2\theta}f(u_n)u_n-\frac12F(u_n)\right)\,dx\\
&\ge \left(\frac12-\frac{1}{2\theta}\right)\|u_n\|^{2}.
\end{aligned}
\]
Since $I(u_n)\to c\in\R$, it follows that $(\|u_n\|)$ is bounded. Therefore, $(u_n)$ is bounded in $E$.
\end{proof}

\begin{Lem}\label{Lem3.5}
There exists a positive constant \(C_*\) such that, if \((f_4)\) holds with
\(C_0\ge C_*\), then $c<\ell_*,$ where the constant \(C_*\) is to be determined by the data and by the Moser construction in the proof below.
\end{Lem}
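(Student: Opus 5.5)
The plan is to test the Nehari level against a single explicit nonnegative function, a truncated Moser sequence element. By Lemma~\ref{Lem3.2}, for any $w\in E$ with $w\ge0$, $w\not\equiv0$,
\[
c\le I(t_w w)=\max_{t>0}I(tw).
\]
It therefore suffices to find one such $w$ with $\max_{t>0}I(tw)<\ell_*$ once $C_0$ is large. Since $\ell_*$ is a fixed positive number and $C_0$ is free, we do not need the sharp asymptotics of the Moser sequence. A fixed element suffices, and $C_*$ is allowed to depend on it and on $T_0$, as the statement permits.

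\textbf{Step 1: the test function.} Fix $\rho>0$ and an index $m$. Let $w=w_m$ be the standard Moser function: it equals $\frac{1}{\sqrt{2\pi}}(\log m)^{1/2}$ on $B_{\rho/m}$, equals $\frac{1}{\sqrt{2\pi}}\log(\rho/|x|)(\log m)^{-1/2}$ on $B_\rho\setminus B_{\rho/m}$, and vanishes outside $B_\rho$. Then $w\in H^1_0(B_\rho)\subset E$, since $V$ is bounded on $B_\rho$, and $w\ge0$. Lemma~\ref{Lem2.1} and $(V_1)$ give the finite constant
\[
\|w\|^2=\|\nabla w\|_2^2+\tfrac12[w]_s^2+\int_{B_\rho}Vw^2\,dx=:K,
\]
where $\|\nabla w\|_2^2=1$.

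\textbf{Step 2: lower bound for the Choquard term.} By $(f_3)$, $F\ge0$, and by the positivity established in Lemma~\ref{Lem3.2}, $F>0$ on $(0,\infty)$. Restricting the double integral to $B_{\rho/m}\times B_{\rho/m}$, where $|x-y|\le 2\rho/m$ and $w\equiv A:=(\log m/2\pi)^{1/2}$, gives
\[
\int_{\R^2}\Bigl(\frac1{|x|^\mu}*F(tw)\Bigr)F(tw)\,dx\ \ge\ \Bigl(\frac{m}{2\rho}\Bigr)^{\mu}\Bigl(\frac{\pi\rho^2}{m^2}\Bigr)^2F(tA)^2 =:\kappa\,F(tA)^2,
\]
with $\kappa>0$ depending only on $\rho,m,\mu$. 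Hence
\[
I(tw)\le \frac{K}{2}t^2-\frac{\kappa}{2}F(tA)^2=:h(t).
\]

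\textbf{Step 3: splitting in $t$.} Set $t_\sharp:=(2\ell_*/K)^{1/2}$, so that $\frac K2 t^2<\ell_*$ for $t<t_\sharp$.

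For $t\le t_\sharp$ (more precisely, $t$ slightly below $t_\sharp$ so the inequality is strict), we simply use $h(t)\le\frac K2t^2<\ell_*$. To make the maximum strictly smaller, split more carefully: for $t\le t_\flat<t_\sharp$ with $t_\flat$ fixed, $h(t)\le\frac K2 t_\flat^2<\ell_*$.

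For $t\ge t_\flat$, we need $tA\ge T_0$, which holds if $m$ is chosen large enough that $t_\flat A\ge T_0$. This is a choice made once, before $C_0$, with $\rho$ fixed. Then $(f_4)$ gives
\[
h(t)\le \frac K2 t^2-\frac{\kappa}{2}C_0^2e^{8\pi t^2A^2}.
\]
Since the exponential dominates, the supremum over $t\ge t_\flat$ is at most
\[
\sup_{t\ge t_\flat}\Bigl(\frac K2t^2-\frac\kappa2 C_0^2e^{8\pi A^2t^2}\Bigr),
\]
which tends to $-\infty$ as $C_0\to\infty$. Indeed, the term is at most $\frac K2t^2-\frac\kappa2C_0^2(1+8\pi A^2t^2)$, which is negative for all $t$ once $\kappa C_0^2 8\pi A^2\ge K$. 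Define $C_*$ by this inequality, i.e. $C_*^2=K/(8\pi\kappa A^2)$. Then for $C_0\ge C_*$ we get $h(t)\le0<\ell_*$ on $[t_\flat,\infty)$.

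\textbf{Conclusion.} Combining the two ranges gives $\max_{t>0}I(tw)<\ell_*$, hence $c<\ell_*$.

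\textbf{Main obstacle.} The delicate point is the ordering of choices. The parameters $\rho$, $t_\flat$ and $m$ must be fixed first, depending only on $T_0$, $V$, $s$ and $\mu$, and only then $C_*$. This keeps $C_*$ independent of $C_0$ and matches the remark in $(f_4)$ that $C_*$ may depend on $T_0$ and on the Moser construction. One must also check that $K$, which contains the Gagliardo seminorm of the Moser function, is finite. This follows from Lemma~\ref{Lem2.1}, since $w\in H^1$.
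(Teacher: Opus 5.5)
Your proof follows the same plan as the paper. You take a fixed Moser-type test function and bound the Choquard term from below by restricting the double integral to the inner ball. You then split the $t$-range into a small-$t$ part controlled by the quadratic term and a large-$t$ part where $(f_4)$ applies, and choose $C_*$ last.

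There is, however, a circularity in the order of choices, and your ``Main obstacle'' paragraph does not resolve it. Your $t_\flat$ is taken below $t_\sharp=(2\ell_*/K)^{1/2}$, and $K=\|w_m\|^2$ depends on $m$. You then choose $m$ large so that $t_\flat A\ge T_0$, with $A=(\log m/2\pi)^{1/2}$. So $t_\flat$ is not fixed before $m$, as you claim.

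This matters. If $K(m)$ grew like $\log m$, then $t_\flat A$ would stay bounded and the requirement $t_\flat A\ge T_0$ could fail for every $m$. The missing line is $\sup_m\|w_m\|^2<\infty$, which holds for three reasons:
\begin{itemize}
\item $\|\nabla w_m\|_2=1$;
\item $\|w_m\|_2^2\le C\rho^2$ for $m\ge2$ (in fact $O(1/\log m)$), so Lemma~\ref{Lem2.1} bounds $[w_m]_s^2$ uniformly in $m$;
\item $\int Vw_m^2\,dx\le\|V\|_{L^\infty(B_\rho)}\|w_m\|_2^2$.
\end{itemize}
With $K_0:=\sup_m K(m)$, you can take $t_\flat<(2\ell_*/K_0)^{1/2}$ independently of $m$, then choose $m$ with $t_\flat A\ge T_0$, and then define $C_*$.

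The paper avoids this issue by normalizing $\psi_k=\omega_k/\|\omega_k\|$. The quadratic part is then exactly $t^2/2$, and the small-$t$ cutoff can be fixed before $k$. The same boundedness of $\|\omega_k\|$ is what makes $M_k=\inf_{B_{r/k}}\psi_k\to\infty$ there.

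On the large-$t$ range the two arguments differ only cosmetically. You use $e^x\ge1+x$ to show $h\le0$ on $[t_\flat,\infty)$ once $8\pi\kappa A^2C_0^2\ge K$. The paper instead checks that its comparison function $H$ is nonincreasing beyond its small-$t$ cutoff and bounds the supremum by the value there. That gives a smaller admissible $C_*$, but the lemma only asserts that some $C_*$ exists, so either version works once the ordering is repaired.
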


\begin{proof}
We construct a nonnegative test function whose one-dimensional maximum in the  Nehari manifold
is below \(\ell_*\). Choose \(x_0\in\R^2\) and \(r>0\). Since \(V\) is
continuous, it is bounded on \(\overline{B_r(x_0)}\). Suppose
\(\{\omega_k\}\subset C_c^\infty(B_r(x_0))\) is a standard nonnegative Moser sequence such that
\[
\int_{\R^2}|\nabla \omega_k|^2\,dx=1+o(1),\qquad
\|\omega_k\|_2^2=o(1),\qquad [\omega_k]_s^2=o(1),~~\text{and}~~\inf_{B_{r/k}(x_0)}\omega_k\to+\infty.\]
Then the fractional estimate follows from
\([v]_s\le C\|v\|_2^{1-s}\|\nabla v\|_2^s\). Set $\psi_k=\frac{\omega_k}{\|\omega_k\|}.$ Therefore, we have \(\psi_k\ge0\), \(\psi_k\not\equiv0\), \(\|\psi_k\|=1\), and
\[
M_k:=\inf_{B_{r/k}(x_0)}\psi_k\to+\infty .
\]
We now choose a sufficiently small \(\rho>0\) such that $\frac{\rho^2}{2}<\ell_*.$ Then choose a large \(k\) so that $\rho M_k\ge T_0.$ Fix this \(k\) and define
\[
D_k=
\iint_{B_{r/k}(x_0)\times B_{r/k}(x_0)}
\frac{dx\,dy}{|x-y|^\mu}>0,\qquad
B_k=8\pi M_k^2.
\]
We now set
\[
C_*=
\left(D_k B_k e^{B_k\rho^2}\right)^{-1/2}.
\]
Thus, for \(0\le t\le\rho\), the Choquard term is nonnegative, and putting \(e_0=\psi_k\), we get
\begin{equation}\label{eq3.5}
    I(te_0)\le\frac{t^2}{2}\le\frac{\rho^2}{2}<\ell_*.
\end{equation}
Let \(t\ge\rho\). Since \(te_0\ge T_0\) on \(B_{r/k}(x_0)\), assumption \((f_4)\)
with \(C_0\ge C_*\) gives
\[
F(te_0(x))\ge C_0 e^{4\pi t^2e_0(x)^2}
\qquad\text{for }x\in B_{r/k}(x_0).
\]
Therefore,
\[
\int_{\R^2}\left(\frac1{|x|^\mu}*F(te_0)\right)F(te_0)\,dx
\ge
C_0^2D_k e^{8\pi M_k^2t^2}.
\]
Consequently, we have
\[
I(te_0)
\le
H(t):=\frac{t^2}{2}-\frac{C_0^2D_k}{2}e^{B_kt^2}
\qquad (t\ge\rho).
\]
Since $H'(t)=t\left(1-C_0^2D_kB_ke^{B_kt^2}\right)$, the definition of \(C_*\) implies \(H'(t)\le0\) for all \(t\ge\rho\). Thus,
\begin{equation}\label{eq3.6}
    \sup_{t\ge\rho}I(te_0)\le H(\rho)
=
\frac{\rho^2}{2}-\frac{C_0^2D_k}{2}e^{B_k\rho^2}
<\ell_*.
\end{equation}
Therefore, from \eqref{eq3.5} and \eqref{eq3.6}, we have
\[
\max_{t\ge0}I(te_0)<\ell_*.
\]
Let \(t_{e_0}>0\) be given by Lemma~\ref{Lem3.2}. Then, we have $c\le I(t_{e_0}e_0)=\max_{t\ge0}I(te_0)<\ell_*.$
\end{proof}

\begin{Lem}\label{Lem3.6}
Assume $(V_1)$--$(V_2)$ and $(f_1)$--$(f_4)$.
There exists \(u\in\mathcal{N}\), with \(u\ge0\) a.e. in \(\R^2\), such that \(I(u)=c\).
\end{Lem}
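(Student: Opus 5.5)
Take a minimizing sequence \((u_n)\subset\mathcal N\) with \(I(u_n)\to c\), replace it by a nonnegative one, upgrade it to a Palais--Smale sequence via Ekeland's principle, and conclude with the compactness Lemma~\ref{Lem2.7}, using \(c<\ell_*\) from Lemma~\ref{Lem3.5}.

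\emph{Step 1: reduction to nonnegative functions.} For \(u\in\mathcal N\), the truncation in \((f_1)\) gives \(F(u)=F(u^+)\) and \(f(u)u=f(u^+)u^+\). Also \(\|u^+\|^2\le\|u\|^2\): the local and potential parts split, and the Gagliardo term satisfies \(|u^+(x)-u^+(y)|\le|u(x)-u(y)|\). So \(I'(u^+)[u^+]\le I'(u)[u]=0\), and \(u^+\not\equiv0\) by \eqref{eq3.2}. Arguing as in Lemma~\ref{Lem3.1} with \(g_{u^+}\), we get \(g_{u^+}'(1)\le 0\), hence the unique \(t_{u^+}\) from Lemma~\ref{Lem3.2} satisfies \(t_{u^+}\le1\). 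Then
\[
I(t_{u^+}u^+)=\frac{t_{u^+}^2}{2}\|u^+\|^2-\frac12\int\Big(\frac{1}{|x|^\mu}*F(t_{u^+}u)\Big)F(t_{u^+}u)\,dx\le I(t_{u^+}u)\le I(u),
\]
the last inequality by Lemma~\ref{Lem3.1}. Thus \(c=\inf\{I(v):v\in\mathcal N,\ v\ge0\}\), and we may take the minimizing sequence nonnegative.

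\emph{Step 2: a Palais--Smale sequence.} I would avoid Ekeland on \(\mathcal N\), whose \(C^1\)-manifold structure needs \(f\in C^1\) and a nondegeneracy check. Instead use the mountain-pass characterization. By Lemmas~\ref{Lem3.1}--\ref{Lem3.2} and Step~1,
\[
c=\inf_{u\ge0,\,u\ne0}\ \max_{t>0}I(tu).
\]
Moreover \(c\) equals the mountain-pass level \(c_{mp}\) over paths from \(0\) to points with \(I<0\). Indeed \(c_{mp}\le c\) via rays, and \(c_{mp}\ge c\) because every such path crosses \(\mathcal N\): near \(0\) one has \(\|u\|^2>\) the Choquard term by the estimates of Lemma~\ref{Lem3.3}, while at the endpoint the reverse holds by \((f_3)\). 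To keep nonnegativity I would apply the general minimax principle (Willem's quantitative deformation lemma) to paths built from nonnegative rays \(t\mapsto tTu_n^+\). Since \(u_n\ge0\), this yields a PS sequence \((w_n)\) at level \(c\) with \(\operatorname{dist}(w_n,\{tu_n\})\to0\). Then \(\|w_n^-\|\to0\) by Lemma~\ref{Lem2.6}, and the truncated sequence \(w_n^+\) is still PS: test \(I'(w_n)\) with \(w_n^-\) and use \(f=0\) on \((-\infty,0]\).

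This step is the main obstacle. Lemma~\ref{Lem2.7} is stated for nonnegative PS sequences, so I must either check that its proof only uses \(\frac1{2\theta}f(t)t-\frac12F(t)\ge0\) for all \(t\), which holds by truncation as in Lemma~\ref{Lem3.4}, and hence applies to any PS sequence, or pass to \(w_n^+\). I would try the first option, which makes nonnegativity unnecessary at this stage.

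\emph{Step 3: conclusion.} By Lemma~\ref{Lem3.5}, \(c<\ell_*\), so Lemma~\ref{Lem2.7} gives \(w_n\to u\) in \(E\) along a subsequence, with \(I(u)=c\) and \(I'(u)=0\). Lemma~\ref{Lem3.3} gives \(\|w_n\|^2\ge C\), and since \(\|w_n\|^2-\int(|x|^{-\mu}*F(w_n))f(w_n)w_n\to0\) the distance to \(\mathcal N\) stays controlled; passing to the limit, \(\|u\|^2\ge C>0\), so \(u\in\mathcal N\). For the sign, test \(I'(u)=0\) with \(u^-\): the right-hand side vanishes because \(f(u)u^-=0\), while \((u,u^-)\) contains \(-\|u^-\|^2_{\mathrm{loc}+V}\) plus a nonlocal term that is \(\le -\) the Gagliardo seminorm of \(u^-\). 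Hence \(u^-=0\), i.e. \(u\ge0\) a.e.
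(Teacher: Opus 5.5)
Your chosen route is correct, but it differs from the paper's. The paper never builds a Palais--Smale sequence. After making the minimizing sequence nonnegative (essentially your Step~1), it uses only the energy bound $\limsup_n\|u_n\|^2\le c/a<1/p_\mu$, with $a=\frac12-\frac1{2\theta}$. With this bound it reruns the Trudinger--Moser/Vitali/tail part of the proof of Lemma~\ref{Lem2.7}, which gives $F(u_n)\to F(u_0)$ and $f(u_n)u_n\to f(u_0)u_0$ in $L^{p_\mu}(\R^2)$ for the weak limit $u_0$, with no information on $I'(u_n)$. Then $u_0\ne0$ by Lemma~\ref{Lem3.3}, and weak lower semicontinuity gives $I'(u_0)[u_0]\le0$, hence $t_0\le1$. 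Finally $I(t_0u_0)\le\liminf_n I(t_0u_n)\le\lim_n I(u_n)=c$ by Lemma~\ref{Lem3.1}.

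That argument is elementary and carries $u\ge0$ through the weak limit. However, it only produces a minimizer, so the paper needs the separate deformation argument in Step~1 of the proof of Theorem~\ref{Thm1.1} to get $I'(u)=0$. Your argument costs more: the identification $c=c_{mp}$ and the quantitative deformation lemma along the rays $t\mapsto tT_nu_n$. In return you get a Palais--Smale sequence at level $c<\ell_*$, so Lemma~\ref{Lem2.7} gives a critical point at level $c$ directly. In effect you prove Lemma~\ref{Lem3.6} and Step~1 of the theorem at once. You are right that the sign hypothesis in Lemma~\ref{Lem2.7} is superfluous, since $F\ge0$ and $\frac1{2\theta}f(t)t-\frac12F(t)\ge0$ on all of $\R$ by the truncation. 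Recovering $u\ge0$ by testing with $u^-$ is exactly the paper's Step~2.

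Two details need fixing.

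\emph{The branch you set aside.} It would not work as written. Lemma~\ref{Lem2.6} is continuity at a fixed point, while the nonnegative comparison functions $tu_n$ move with $n$ and have no compactness. So $\operatorname{dist}(w_n,\{tu_n\})\to0$ does not give $\|w_n^-\|\to0$ in $E$; indeed $u\mapsto u^-$ is not uniformly continuous on bounded sets of $H^1(\R^2)$. This is harmless since you use the other branch, and there Step~1 is not even needed. For any $u_n\in\mathcal N$, Lemma~\ref{Lem3.1} gives $\max_{t>0}I(tu_n)=I(u_n)$. Also $I(tu_n)\to-\infty$ by the argument of Lemma~\ref{Lem3.2} applied to $u_n^+\ne0$, because $F(tu_n)=F(tu_n^+)$.

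\emph{Nontriviality of the limit.} Lemma~\ref{Lem3.3} does not apply to $w_n$, which need not lie on $\mathcal N$. To get $u\ne0$, use $c\ge a\inf_{v\in\mathcal N}\|v\|^2>0=I(0)$. This follows from Lemma~\ref{Lem3.3} together with the inequality $I(v)\ge a\|v\|^2$ on $\mathcal N$ from the proof of Lemma~\ref{Lem3.4}. Then $I(u)=c$ forces $u\ne0$.
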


\begin{proof}
Let $(u_n)\subset\mathcal N$ be a minimizing sequence, that is, $I(u_n)\to c.$ Set $w_n=u_n^{+}$. By the truncation condition in $(f_1)$,
\[
F(u_n)=F(w_n),\qquad f(u_n)u_n=f(w_n)w_n \quad \text{a.e. in }\R^2.
\]
Moreover, Lemma~\ref{Lem2.6} gives $\|w_n\|\le \|u_n\|$. Thus, for every \(t>0\),
the Choquard parts of \(I(tw_n)\) and \(I(tu_n)\) coincide, while the quadratic part
of \(I(tw_n)\) is less than that of \(I(tu_n)\). Therefore, we have
\[
I(tw_n)\le I(tu_n)\qquad\text{for all }t>0.
\]
Let $t_n>0$ be such that $t_n w_n\in\mathcal N$; the existence and uniqueness follow from Lemma~\ref{Lem3.2}. Then
\[
I(t_n w_n)=\max_{t>0}I(tw_n)\le \max_{t>0}I(tu_n)=I(u_n),
\]
where the last equality follows from Lemma~\ref{Lem3.1}. Replacing $u_n$ by $t_n w_n$, we may assume that
\[
u_n\ge0,\qquad u_n\in\mathcal N,\qquad\text{and}\qquad I(u_n)\to c.
\]
By Lemma~\ref{Lem3.4}, we get that $(u_n)$ is bounded in $E$. Since $u_n\in\mathcal N$,
\[
I(u_n)=I(u_n)-\frac{1}{2\theta}I'(u_n)[u_n]\ge a\|u_n\|^2,
\]
where $a=\frac12-\frac{1}{2\theta}.$ Therefore, using Lemma~\ref{Lem3.5}, $c<\ell_*=\frac{a}{p_\mu}$, we get
\[
\limsup_{n\to\infty}\|u_n\|^2\le \frac{c}{a}<\frac1{p_\mu}.
\]
Therefore, passing to a subsequence, there exist $\delta\in(0,1)$ and $n_0\in\N$ such that
\[
p_\mu\|\nabla u_n\|_2^2\le p_\mu\|u_n\|^2\le 1-\delta
\quad\text{for all }n\ge n_0.
\]
Moreover, up to a subsequence, we have
\[
u_n\rightharpoonup u_0 \quad\text{in }E,
\qquad
u_n\to u_0 \quad\text{in }L^p(\R^2)\ \text{for every }2\le p<\infty,
\qquad
u_n(x)\to u_0(x)\quad\text{a.e. in }\R^2,
\]
for some $u_0\in E$, with $u_0\ge0$ a.e. Arguing exactly as in the proof of Lemma~\ref{Lem2.7}, the uniform subcritical gradient bound implies
\[
F(u_n)\to F(u_0)\ \text{in }L^{p_\mu}(\R^2),
\qquad
f(u_n)u_n\to f(u_0)u_0\ \text{in }L^{p_\mu}(\R^2).
\]
By Lemma~\ref{Lem2.3},
\[
\int_{\R^2}\left(\frac1{|x|^\mu}*F(u_n)\right)F(u_n)\,dx
\to
\int_{\R^2}\left(\frac1{|x|^\mu}*F(u_0)\right)F(u_0)\,dx,
\]
and
\[
\int_{\R^2}\left(\frac1{|x|^\mu}*F(u_n)\right)f(u_n)u_n\,dx
\to
\int_{\R^2}\left(\frac1{|x|^\mu}*F(u_0)\right)f(u_0)u_0\,dx.
\]
Since $u_n\in\mathcal N$, we have
\[
\|u_n\|^2=\int_{\R^2}\left(\frac1{|x|^\mu}*F(u_n)\right)f(u_n)u_n\,dx,
\]
and therefore,
\[
\lim_{n\to\infty}\|u_n\|^2
=
\int_{\R^2}\left(\frac1{|x|^\mu}*F(u_0)\right)f(u_0)u_0\,dx.
\]
Observe that if $u_0\equiv0$, then $\|u_n\|\to0$, contradicting Lemma~\ref{Lem3.3}. Thus $u_0\not\equiv0$.

Let $t_0>0$ be the unique number such that $t_0u_0\in\mathcal N$, given by Lemma~\ref{Lem3.2}. Define $g_0(t)=I(tu_0)$. Now, by weak lower semicontinuity of the norm, we get
\[
\|u_0\|^2\le \liminf_{n\to\infty}\|u_n\|^2
=
\int_{\R^2}\left(\frac1{|x|^\mu}*F(u_0)\right)f(u_0)u_0\,dx.
\]
Thus, we have
\[
g_0'(1)=I'(u_0)[u_0]\le0.
\]
We next justify that \(t_0\le1\). Applying the proof of Lemma~\ref{Lem3.1}
to the element \(t_0u_0\in\mathcal N\), the map
\(\lambda\mapsto I(\lambda t_0u_0)\) satisfies
\[
\frac{d}{d\lambda}I(\lambda t_0u_0)>0\quad\text{for }0<\lambda<1,
\qquad \text{and} \qquad
\frac{d}{d\lambda}I(\lambda t_0u_0)<0\quad\text{for }\lambda>1.
\]
Since
\[
g_0(t)=I(tu_0)=I\left(\frac{t}{t_0}t_0u_0\right),
\]
it follows that \(g_0'(t)>0\) for \(0<t<t_0\) and \(g_0'(t)<0\) for
\(t>t_0\). If \(t_0>1\), then \(g_0'(1)>0\), contradicting
\(g_0'(1)\le0\). Therefore, \(t_0\le1\).

Because $u_n\in\mathcal N$, Lemma~\ref{Lem3.1} yields $I(t_0u_n)\le I(u_n)\quad\text{for all }n.$ Since $0<t_0\le1$, the sequence $(t_0u_n)$ still satisfies the same uniform subcritical gradient bound. Therefore, arguing as above,
\[
F(t_0u_n)\to F(t_0u_0)\qquad\text{in }L^{p_\mu}(\R^2).
\]
From Lemma~\ref{Lem2.3}, we get
\[
\int_{\R^2}\left(\frac1{|x|^\mu}*F(t_0u_n)\right)F(t_0u_n)\,dx
\to
\int_{\R^2}\left(\frac1{|x|^\mu}*F(t_0u_0)\right)F(t_0u_0)\,dx.
\]
Also, using the weak lower semicontinuity of the norm, we obtain
\[
I(t_0u_0)\le \liminf_{n\to\infty}I(t_0u_n)
\le \limsup_{n\to\infty}I(t_0u_n)
\le \lim_{n\to\infty}I(u_n)=c.
\]
Since $t_0u_0\in\mathcal N$, we also have $c\le I(t_0u_0)$. Therefore, we get $I(t_0u_0)=c.$ Setting $u=t_0u_0$ completes the proof.
\end{proof}

\begin{proof}[Proof of Theorem~\ref{Thm1.1}]
Assume the hypotheses of Theorem~\ref{Thm1.1}. By Lemma~\ref{Lem3.6}, there exists
\(u\in\mathcal N\), with \(u\ge0\) a.e. in \(\R^2\), such that $I(u)=c.$

\noindent\textbf{Step 1. \(u\) is a critical point of \(I\).}
Assume by contradiction that \(I'(u)\ne0\) in \(E'\). Then there exists
\(\phi\in E\) such that $I'(u)[\phi]<0$. Replacing \(\phi\) by a positive multiple, we may assume $I'(u)[\phi]\le -2.$ By continuity of \(I'\), there exists \(\varepsilon\in(0,1)\) such that
\[
I'(tu+\sigma\phi)[\phi]\le -1
\quad\text{for all } |t-1|\le \varepsilon,\ |\sigma|\le \varepsilon .
\]
Taking \(\varepsilon>0\) smaller if necessary, we may also assume that
\[
\varepsilon\|\phi\|<(1-\varepsilon)\|u\|.
\]
Choose \(\eta\in C^\infty([0,+\infty))\) such that
\[
0\le \eta\le1,\qquad
\eta(t)=1\text{ for }|t-1|\le\frac{\varepsilon}{2},
\qquad
\eta(t)=0\text{ for }|t-1|\ge\varepsilon .
\]
Now, define
\[
h(t)=tu+\varepsilon\eta(t)\phi,\qquad\text{and}\qquad
\Psi(t)=I(h(t)),~ t>0.
\]
Then \(h(t)\ne0\) for \(t\in[1-\varepsilon,1+\varepsilon]\). Next, we prove that
\begin{equation}\label{eq3.7}
\sup_{t>0}\Psi(t)<I(u)=c.
\end{equation}
Since \(u\in\mathcal N\), Lemma~\ref{Lem3.2} and Lemma~\ref{Lem3.1} imply
\[
I(tu)\to0\quad\text{as }t\to0^+,
\qquad\text{and}\qquad
I(tu)\to-\infty\quad\text{as }t\to+\infty.
\]
Thus there exist \(0<a<1-\varepsilon\) and \(b>1+\varepsilon\) such that
\begin{equation}\label{eq3.8}
    I(tu)\le\frac{c}{2}
\quad\text{for }t\in(0,a]\cup[b,+\infty).
\end{equation}
Note that on this set \(h(t)=tu\), and therefore, \(\Psi(t)\le c/2\). Now on the compact interval \([a,b]\) we have \(\Psi(t)<I(u)\) for every \(t\).
Indeed, if \(|t-1|\ge\varepsilon\), then \(h(t)=tu\) and Lemma~\ref{Lem3.1} give \(I(tu)<I(u)\). If \(|t-1|<\varepsilon\), then
\[
\begin{aligned}
\Psi(t)
&=I(tu)+\varepsilon\eta(t)\int_0^1
I'(tu+\sigma\varepsilon\eta(t)\phi)[\phi]\,d\sigma\\
&\le I(tu)-\varepsilon\eta(t)<I(u),
\end{aligned}
\]
where the strict inequality follows from Lemma~\ref{Lem3.1} if \(t\ne1\), combined with the fact \(\eta(1)=1\) if \(t=1\). Since \(\Psi\) is continuous and \([a,b]\) is compact, we get
\begin{equation}\label{eq3.9}
    \sup_{t\in[a,b]}\Psi(t)<I(u).
\end{equation}
Thus, the estimate \eqref{eq3.7} follows from the estimates \eqref{eq3.8} and \eqref{eq3.9}.

Again, define $\Upsilon(t)=I'(h(t))[h(t)],~ t\in[1-\varepsilon,1+\varepsilon].$ Since \(h(1-\varepsilon)=(1-\varepsilon)u\) and
\(h(1+\varepsilon)=(1+\varepsilon)u\), the proof of Lemma~\ref{Lem3.1} gives
\[
\Upsilon(1-\varepsilon)>0,\qquad\text{and}\qquad
\Upsilon(1+\varepsilon)<0.
\]
Then there exists \(\bar t\in(1-\varepsilon,1+\varepsilon)\) such that $\Upsilon(\bar t)=0.$ Since \(h(\bar t)\ne0\), we have \(h(\bar t)\in\mathcal N\). Thus, we have
\[
c\le I(h(\bar t))\le \sup_{t>0}\Psi(t)<c,
\]
which is a contradiction. Therefore, we conclude \(I'(u)=0\).

\noindent\textbf{Step 2. \(u\ge0\) a.e. in \(\R^2\).} Let \(u^+=\max\{u,0\}\ge0\) and \(u^-=\min\{u,0\}\le0\) be the positive and negative parts of $u$. We recall the following algebraic inequality: $(a-b)(a^--b^-)\ge |a^--b^-|^2$. Now, testing \(I'(u)=0\) by \(u^-\) and using the truncation condition in \((f_1)\), we obtain
\[
0=I'(u)[u^-]=(u,u^-).
\]
Moreover, we have
\[
\nabla u\cdot\nabla u^-=|\nabla u^-|^2,\qquad\text{and}\qquad uu^-=|u^-|^2.
\]
Thus, we get
\[
0=(u,u^-)\ge \|u^-\|^2,
\]
implying that \(u^-=0\). Therefore, \(u\ge0\) a.e. in \(\R^2\).

Since $u\in\mathcal N$, $u\not\equiv0$, $I'(u)=0$, and
$I(u)=c=\inf_{\mathcal N}I$, the function $u$ is a nontrivial nonnegative weak
solution of \eqref{eq1.1}. Moreover, every nontrivial weak solution $v$ of
\eqref{eq1.1} satisfies $I'(v)[v]=0$, and thus belongs to \(\mathcal N\).
Therefore, $u$ has the least energy among all nontrivial weak solutions. We now
prove that $u$ is strictly positive.

\noindent\textbf{Step 3. Strict positivity.}
Using $(f_3)$, we have $f(t)t\ge0$ for all $t\ge0$. This implies $f(t)\ge0$ for $t>0$.
We claim that
\[
f(t)>0,~\text{for all }t>0.
\]
Indeed, if $f(t_0)=0$ for some $t_0>0$, then $f(t_0)/t_0=0$. Since the map
$t\mapsto f(t)/t$ is strictly increasing on $(0,+\infty)$, we get 
$f(t)/t<0$ for every $t\in(0,t_0)$, contradicting the assumption $f(t)\ge0$ on
$(0,+\infty)$. Therefore, $f(t)>0,~\text{for all }t>0.$ Consequently, we have $F(t)>0$ for all $t>0,$ since $F$ is the primitive of $f$. $u\ge0$ and $u\not\equiv0$ imply that $F(u)\ge0$ and $F(u)\not\equiv0.$ Define
\[
H(x)=\left(\frac1{|x|^\mu}*F(u)\right)(x)f(u(x))-V(x)u(x).
\]
Since $u\in\mathcal N$ and $I(u)=c<\ell_*$, we have
\begin{align*}
    &I(u)=I(u)-\frac1{2\theta}I'(u)[u]
\ge \left(\frac12-\frac1{2\theta}\right)\|u\|^2\\
\Rightarrow&\|u\|^2<\frac1{p_\mu}.
\end{align*}
Choose $m>p_\mu$ close to $p_\mu$ and $\alpha>4\pi$ close to $4\pi$ such that $\alpha m\|\nabla u\|_2^2<4\pi.$ Then the Trudinger--Moser estimates used in Lemma~\ref{Lem2.7} imply that $f(u)\in L^m_{\mathrm{loc}}(\R^2).$ Moreover, proceeding to obtain \eqref{eq2.4} as in Lemma~\ref{Lem2.4}, we get $F(u)\in L^{p_\mu}(\R^2).$ By the mapping form of the Hardy--Littlewood--Sobolev inequality,
\[
\frac1{|x|^\mu}*F(u)\in L^{4/\mu}(\R^2).
\]
Since \(m>p_\mu=4/(4-\mu)\), H\"older's inequality gives
\[
\left(\frac1{|x|^\mu}*F(u)\right)f(u)\in L^r_{\mathrm{loc}}(\R^2)~~\text{for some \(r>1\).}
\]
Furthermore, we have \(V\in C(\R^2)\subset L^\infty_{\mathrm{loc}}(\R^2)\), and
\(u\in H^1_{\mathrm{loc}}(\R^2)\subset L^q_{\mathrm{loc}}(\R^2),~\forall\,q<+\infty\). Thus, we get
\[
V(\cdot)u\in L^r_{\mathrm{loc}}(\R^2)~~\text{for some \(r>1\).}
\]
Therefore, we obtain $H\in L^r_{\mathrm{loc}}(\R^2)$ for some \(r>1\). Now, fix \(x_0\in\R^2\) and choose \(R\in(0,1)\). Since \(u\in H^1(\R^2)\), we get \(u\in W^{1,2}_{\mathrm{loc}}(B_{4R}(x_0))\). In addition, we also have
\[
\int_{\R^2}\frac{|u(y)|}{(1+|y|)^{2+2s}}\,dy
\le
\|u\|_{2}\left(\int_{\R^2}\frac{dy}{(1+|y|)^{4+4s}}\right)^{1/2}
<\infty.
\]
Thus \(u\) belongs to the natural tail space (see \cite{GarainLindgren2023, ShangZhang2023}). We now apply Theorem~1.4 of
\cite{GarainLindgren2023} to the following problem
\[
-\Delta u+(-\Delta)^s u = H
\qquad\text{in }B_{4R}(x_0),
\]
with \(p=2\), \(A=1\), \(N=2\), and \(r>1=N/2\). Therefore, we obtain \(u\in C^\delta_{\mathrm{loc}}(\R^2)\) for some \(\delta\in(0,1)\). We now show that the following Choquard potential
\[
\Phi(x)=\left(\frac1{|x|^\mu}*F(u)\right)(x)
\]
is continuous in \(\R^2\). Let \(K\subset\R^2\) be compact. Choose \(R_K>0\) such that \(K\subset B_{R_K}\). We split the following integral into two parts:
\[
\Phi(x)
=
\int_{B_{2R_K}}\frac{F(u(y))}{|x-y|^\mu}\,dy
+
\int_{\R^2\setminus B_{2R_K}}\frac{F(u(y))}{|x-y|^\mu}\,dy
=:\Phi_1(x)+\Phi_2(x).
\]
Now, since \(u\in C(\overline{B_{2R_K}})\), the function \(F(u)\chi_{B_{2R_K}}\) is
bounded and has compact support. Again, since \(|x|^{-\mu}\in L^1(B_{3R_K})\), the continuity of convolutions with compactly supported \(L^\infty\)-data gives $\Phi_1\in C(K).$ On the other hand, if \(x\in K\) and \(y\notin B_{2R_K}\), then we have \(|x-y|\ge |y|/2\). Therefore, there exists $C>0$ such that
\[
\frac{F(u(y))}{|x-y|^\mu}
\le
C\frac{F(u(y))}{|y|^\mu}.
\]
Using \(F(u)\in L^{p_\mu}(\R^2)\), \(|y|^{-\mu}\in L^{4/\mu}(\R^2\setminus B_{2R_K})\), and H\"older's inequality, we obtain
\[
\frac{F(u(y))}{|y|^\mu}\in L^1(\R^2\setminus B_{2R_K}).
\]
By applying the dominated convergence theorem, we get \(\Phi_2\in C(K)\). Thus, we have $\Phi\in C(K)$. Since \(K\) is arbitrary, we conclude $\Phi\in C(\R^2)$. Next, we define
\[
\psi(t)=
\begin{cases}
\dfrac{f(t)}{t}, & t>0,\\[0.4em]
0, & t=0.
\end{cases}
\]
Because \(f\in C^1(\R)\), \(f(0)=f'(0)=0\), and \(f(t)=0\) for \(t\le0\), the map \(\psi\) is continuous on \([0,+\infty)\). Since \(u\ge0\) and \(u\in C(\R^2)\), the coefficient \(c(x)=\Phi(x)\psi(u(x))-V(x)\in C(\R^2)\). Moreover, \(u\) satisfies
\[
-\Delta u+(-\Delta)^su = c(x)u
\qquad\text{in }\R^2
\]
weakly. Let $c_-(x)=\min\{c(x),0\}\le0.$ Then \(c_-\in C(\R^2)\), and for every nonnegative \(\varphi\in C_c^\infty(\R^2)\),
\[
\int_{\R^2}\nabla u\cdot \nabla\varphi\,dx
+\langle (-\Delta)^su,\varphi\rangle
=
\int_{\R^2}c(x)u\varphi\,dx
\ge
\int_{\R^2}c_-(x)u\varphi\,dx.
\]
Therefore, \(u\) is a continuous weak supersolution in every ball \(B_\rho(x_0)\) to the problem
\[
-\Delta v+(-\Delta)^sv = c_-(x)v
\]
with \(u\ge0\) a.e. in \(\R^2\setminus B_\rho(x_0)\). We now recall the definition of $X^{1,2}(\Omega)$ from \cite{ShangZhang2023}: \textit{For a bounded subset $\Omega$ of $\R^2$, we say that
$u \in X^{1,2}(\Omega)$ if and only if $u \in L^2_{\mathrm{loc}}(\mathbb{R}^2)$ and there exists an open set $U \supset \Omega$ such that} 
$$\|u\|_{H^{1}(U)} +
\int_{\mathbb{R}^2}\frac{|u(x)|}{(1+|x|)^{2+2s}}\,dx< \infty.$$
Since \(u\in H^1(B_\rho(x_0))\) and the above weighted \(L^1\)-estimate holds, we conclude that \(u\in X^{1,2}(B_\rho(x_0))\). Therefore, by Theorem~1.4(i) of \cite{ShangZhang2023}, we get that for every \(x_0\in\R^2\) and for every \(\rho>0\) either $u>0$ in $B_\rho(x_0)$ or \(u=0\) a.e. in \(\R^2\). Since \(u\in\mathcal N\), we have \(u\not\equiv0\). Therefore, \(u>0\) in every ball in $\R^2$. Since \(x_0\in\R^2\) is arbitrary, we conclude that $u>0$ in $\R^2$. Thus $u$ is a least energy positive solution of \eqref{eq1.1}.
\end{proof}

\section*{Acknowledgments}

\medskip
{\bf Funding:} This work is supported by National Natural Science Foundation of China (12301145, 12561020, 12261107) and Yunnan Fundamental Research Projects (202401AU070123, 202601AT070048). S. Ghosh acknowledges financial support under the ARG-MATRICS grant from ANRF, India, (Grant number: ANRF/ARGM/2025/001570/MTR).

\medskip
{\bf Author Contributions:} All authors contributed equally to the writing and preparation of the manuscript.

\medskip
{\bf Data availability:}  Data sharing is not applicable to this article as no new data were created or analyzed in this study.

\medskip
{\bf Conflict of Interests:} The authors declare that they have no conflict of interest.

\end{document}